\documentclass[a4paper,12pt]{amsart}
\usepackage{amsmath}
\usepackage{amssymb}
\usepackage{mathrsfs}
\usepackage{enumerate}
\usepackage{ifthen}
\usepackage{graphicx}
\usepackage{subfig}
\usepackage{caption}
\usepackage[T1]{fontenc} 
\usepackage{tikz}
\usepackage{cite}
\usepackage{mathtools}

\nonstopmode \numberwithin{equation}{section}
\newtheorem{theorem}{Theorem}[section]
\newtheorem{question}{Question}[section]
\newtheorem{prob}{Problem}[section]

\newtheorem{lemma}{Lemma}[section]

\theoremstyle{remark}
\theoremstyle{definition}
\newtheorem{remark}{Remark}[section]
\newtheorem{definition}{Definition}[section]
\newtheorem{example}{Example}[section]

\theoremstyle{plain}
\newtheorem*{thmA}{Theorem A}
\newtheorem*{thmB}{Theorem B}
\newtheorem*{thmC}{Theorem C}
\newtheorem*{thmD}{Theorem D}

\numberwithin{equation}{section}
\numberwithin{theorem}{section}

\newcounter{minutes}
\divide\time by 60
\newcounter{hours}
\multiply\time by 60
\addtocounter{minutes}{-\time}

\begin{document}

\title{Normality Criteria for Differential Monomials and the Sharpness of Lappan-type Theorems}

\author{Molla Basir Ahamed$^*$}
\address{Molla Basir Ahamed,
	Department of Mathematics,
	Jadavpur University,
	Kolkata-700032, West Bengal, India.}
\email{mbahamed.math@jadavpuruniversity.in}

\author{Sanju Mandal}
\address{Sanju Mandal,
	Department of Mathematics,
	Jadavpur University,
	Kolkata-700032, West Bengal, India.}
\email{sanjum.math.rs@jadavpuruniversity.in, sanju.math.rs@gmail.com}

\author{Nguyen Van Thin}
\address{Nguyen Van Thin,
	Department of Mathematics, 
	Thai Nguyen University of Education,
	Luong Ngoc Quyen Street,
	Thai Nguyen City, Thai Nguyen, VietNam.}
\email{thinmath@gmail.com}

\subjclass[2020]{Primary 30D35, 30D45}
\keywords{Spherical derivatives,Meromorphic functions,Normal family,	Normal functions, Nevanlinna theory,	Differential monomial}

\def\thefootnote{}
\footnotetext{ {\tiny File:~\jobname.tex,
printed: \number\year-\number\month-\number\day,
          \thehours.\ifnum\theminutes<10{0}\fi\theminutes }
} \makeatletter\def\thefootnote{\@arabic\c@footnote}\makeatother

\begin{abstract}
A fundamental result of Lappan [Comment. Math. Helv. \textbf{49} (1974), 492-495.] states that a meromorphic function $f$ in the unit disk $\mathbb{D}$ is normal if and only if its spherical derivative is bounded on a five-point subset $E \subset \mathbb{C}$. In this paper, we establish new normality criteria that bridge this classical result with contemporary trends in value distribution theory. We demonstrate that the cardinality of the set $E$ can be reduced from five to as few as three, provided that the spherical derivatives of the function and its successive derivatives $f, f', \dots, f^{(k-1)}$ are bounded on the pre-image of $E$. This shift reveals that analytic data from higher-order derivatives can effectively compensate for a reduction in geometric information from the target set. Furthermore, we extend the Pang-Zalcman theorem to a general class of differential monomials $M[f]$. We prove that if $(M[f])^{\#}$ is bounded on the set of $a$-points ($a \neq 0$), the family $\mathcal{F}$ is normal, provided the degree $d_M$ satisfies a specific sharp threshold relative to the weight $D_M$ and order $k$. These results offer a refined perspective on the natural boundaries of normality and generalize several established findings in the field.
\end{abstract}

\thanks{}
\maketitle
\pagestyle{myheadings}
\markboth{M. B. Ahamed, S. Mandal and N. V. Thin}{Normality concerning spherical derivatives}

\section{\bf Introduction}
In this paper, we consider functions meromorphic in the complex plane $\mathbb{C}$. The study of normality criteria via spherical derivatives—traditionally rooted in the work of Montel and Marty—has evolved to consider the behavior of such derivatives on discrete sets or through the lens of differential operators. A central theme in this area is the relationship between the uniform boundedness of the spherical derivative (or products thereof) and the equicontinuity of the family. Such criteria are not only fundamental to value distribution theory but also serve as indispensable tools in the study of Riemann surfaces, the theory of complex dynamics, and the problem of analytic continuation.
\begin{definition}
Let $\mathcal{F}$ be a family of meromorphic functions on a domain $\Omega\subset\mathbb{C}$. Then $ \mathcal{F} $ is said to be normal on $ \Omega $ in the sense of Montel, if each sequence of $\mathcal{F}$ contains a sub-sequence which converges spherically uniformly on each compact subset of $ \Omega $ to a meromorphic function $ f $ which may be $ \infty $ identically.
\end{definition}

The integration of Nevanlinna theory into the study of normal families has yielded profound implications within geometric function theory, particularly concerning the distribution of zeros and poles and the properties of Riemann surfaces. By utilizing the Second Main Theorem and associated deficiency relations, one can derive potent normality criteria that transcend classical local estimates. Specifically, the modern approach to normality—often mediated by the Zalcman rescaling lemma—relies on the delicate balance between the growth of a function's characteristic and the ramification of its values. In this context, our work explores how the constraints on the spherical derivatives of differential monomials reflect the underlying value distribution, providing a rigorous framework for determining normality through the lens of transcendental growth and analytic continuation.\vspace{1.2mm}

Throughout this paper, we assume familiarity with the fundamental concepts and standard notations of Nevanlinna theory, including the characteristic function $T(r,f)$, the proximity function $m(r,f)$, and the (reduced) counting function $N(r,f)$ ($\overline{N}(r,f)$). For a comprehensive treatment of value distribution theory in one and several complex variables, we refer the reader to the monographs of Yang \cite{Yang-SSP-1993} and Yang and Yi \cite{Yang-Yi-2006}. As is conventional, we denote by $S(r,f)$ any quantity satisfying $S(r,f) = o(T(r,f))$ as $r \to \infty$, possibly outside a set of finite Borel measure. \vspace{2mm}

The concept of normality remains a cornerstone of geometric function theory, with its significance increasingly evident in the study of several complex variables. In recent years, substantial attention has been directed toward higher-dimensional analysis, where the theory of normal families for holomorphic curves, holomorphic mappings, and harmonic mappings has matured significantly. For instance, Hu and Thin \cite{Hu-Thin-CVEE-2020} extended Montel's classical criterion to families of holomorphic mappings from a planar domain $D \subset \mathbb{C}$ into complex projective space $\mathbb{P}^n(\mathbb{C})$, while also generalizing \textit{Lappan’s five-value theorem} to the setting of holomorphic curves. This trajectory of research is well-documented in the literature (see \cite{Aladro-Krantz-JMAA-1991, Ahamed-Mandal-CMFT-2024, Ru-WS-2001, Yang-Fang-Pang-PJM-2014, Yang-Liu-Pang-HJM-2016}).\vspace{1.2mm}

Historically, the study of normality for harmonic functions was initiated by Lappan \cite{Lappan-MZ-1965}, who established necessary and sufficient conditions analogous to the characterization of normal meromorphic functions provided by Lehto and Virtanen \cite{Lehto-Virtanen-AM-1957}. More recently, Arbeláez \textit{et al.} \cite{Arbe-Hern-MM-2019} and Deng \textit{et al.} \cite{Deng-Ponn-Qiao-MM-2020} independently developed harmonic analogs of normal meromorphic function theory, revealing various structural properties of harmonic normal mappings. For a comprehensive overview of different facets of normality and its modern developments, we refer the reader to \cite{Ahamed-Mandal-MM-2022, Dovbush-JGA-2022, Dovbush-CVEE-2022, Tu-PAMS-1999, Zalcman-BAMS-1998}.
\begin{definition}
Let $\mathbb{D}=\{z\in\mathbb{C}:|z|<1\}$ denote the unit disk in the complex plane $ \mathbb{C} $. A function $ f $ meromorphic in $ \mathbb{D} $ is called a normal function if the family $ \mathfrak{F}=\{f\circ\phi: \phi\in Aut(\mathbb{D})\} $ is a normal family, where $ Aut(\mathbb{D})$ denotes the class of conformal automorphisms of $ \mathbb{D} $. Normal functions were first studied by Yosiada \cite{Yosida-PPMSJS-1934}. Subsequently, Noshiro \cite{Noshiro-JFSHU-1938}  gave a characterization of normal functions by showing that a meromorphic function $ f $ is normal if, and only if, 
\begin{equation*}
	\sup_{z\in\mathbb{D}}\left(1-|z|^2\right)f^{\#}(z)<\infty
\end{equation*} 
where $ f^{\#} $ denotes the spherical derivative of $ f $ given by $f^{\#}(z)= {|f^{\prime}(z)|}/{(1+|f(z)|^2)}$.
\end{definition}
Normal families serve as an indispensable framework for investigating the boundary behavior and structural properties of meromorphic functions. The coherence of these families was first systematically established by Montel \cite{Montel-CB-1927} in 1927, providing a unified treatment of the foundational theorems of Picard, Schottky, and Landau. In the modern study of complex variables, the behavior of derivatives often provides the critical data necessary to characterize the global properties of the original function. The present work focuses on establishing normality criteria for a family $\mathcal{F}$ by analyzing the spherical derivatives of successive derivatives of its members. We begin by recalling the formulation of the spherical derivative for higher-order derivatives of meromorphic functions.
\begin{definition}\cite{Tan-Thin-JMAA-2017}
Let $ k\geq 0 $ be an integer and $ f^{(k)} $ be the $ k $-th derivative of a meromorphic function $ f $ and $ f^{(0)}:=f $.  The spherical derivative of $ f^{(k)} $ is defined by 
\begin{align*}
	\left(f^{(k)}(z)\right)^{\#}=\dfrac{\left|f^{(k+1)}(z)\right|}{1+\left|f^{(k)}(z)\right|^2}.
\end{align*}
It is easy to see that $ \left(f^{(0)}(z)\right)^{\#}=f^{\#}(z) $.
\end{definition}

The Nevanlinna theory is the primary technique employed in this paper, and we present several results by generalizing recent findings in this domain. Pommerenke compiled a list of unsolved problems in the field of function theory of one complex variable and its related areas in his 1972 article in \cite{Pommerenke-BLMS-1972}. The most significant of these problems concerned the study of normal functions associated with a set $E$, which is the following. 
\begin{question}\cite{Pommerenke-BLMS-1972}\label{que-1.1}
If $ M>0 $ is given, does there exist a finite set $ E $ such that if $ f $ is a meromorphic in $ \mathbb{D} $, then the condition that $ (1-|z|^2)f^{\#}(z)\leq M $ for each $ z\in f^{-1}(E) $ implies that $ f $ is a normal function?
\end{question}
In 1974, Lappan \cite{Lappan-CMH-1974} provided an affirmative answer to Question \ref{que-1.1} by establishing a foundational result now widely known as Lappan’s Five-Value Theorem for meromorphic functions in $\mathbb{D}$. Recently, this theorem has experienced a resurgence of interest, with numerous authors exploring its various facets and extensions. In particular, the works of Deng et al. \cite{Deng-Ponn-Qiao-MM-2020}, Hu and Thin \cite{Hu-Thin-CVEE-2020}, and Tran and Nguyen \cite{Tran-Nguyen-CMFT-2017}, along with the references therein, provide a comprehensive overview of the modern developments in this direction.
\begin{thmA}\cite{Lappan-CMH-1974}
Let $ E $ be any set consisting of five distinct complex numbers, finite or infinite. If $ f $ is a meromorphic function in the unit disc $ \mathbb{D} $ such that 
\begin{equation}\nonumber
	\sup_{z \in f^{-1}(E)} (1-|z|^2) f^{\#}(z) <\infty,
\end{equation}
then $ f $ is a normal function.
\end{thmA}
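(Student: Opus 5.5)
We argue by contradiction, combining Noshiro's characterization with the Zalcman--Pang rescaling lemma and Nevanlinna's second main theorem. Suppose $f$ satisfies
\[
\sup_{z\in f^{-1}(E)}(1-|z|^2)f^{\#}(z)=M<\infty
\]
but is not normal. By Noshiro's criterion there are points $z_n\in\mathbb{D}$ with $(1-|z_n|^2)f^{\#}(z_n)\to\infty$. Equivalently, the family $\{f\circ\phi:\phi\in\mathrm{Aut}(\mathbb{D})\}$ is not normal at $0$; consider $g_n=f\circ\phi_n$ with $\phi_n(\zeta)=(\zeta+z_n)/(1+\overline{z_n}\zeta)$.

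First, apply Zalcman's lemma to $\{g_n\}$ on $\mathbb{D}$. After passing to a subsequence, it gives points $w_n\to w_0$ with $|w_0|<1$, scales $\rho_n\to0^+$, and a nonconstant meromorphic function $g$ on $\mathbb{C}$ with $g^{\#}\le g^{\#}(0)=1$ such that
\[
h_n(\zeta)=g_n(w_n+\rho_n\zeta)\to g(\zeta)
\]
spherically locally uniformly on $\mathbb{C}$. The key step is to show that every $E$-point of $g$ is multiple. Let $a\in E$ and $g(\zeta_0)=a$. Since $g$ is nonconstant, Hurwitz's theorem (applied to $h_n-a$, or to $1/h_n$ when $a=\infty$) gives $\zeta_n\to\zeta_0$ with $h_n(\zeta_n)=a$. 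Put $z_n'=\phi_n(w_n+\rho_n\zeta_n)\in f^{-1}(E)$. Invariance of $(1-|z|^2)f^{\#}(z)$ under automorphisms gives
\[
h_n^{\#}(\zeta_n)=\rho_n\, g_n^{\#}(w_n+\rho_n\zeta_n)=\rho_n\,\frac{(1-|z_n'|^2)f^{\#}(z_n')}{1-|w_n+\rho_n\zeta_n|^2}\le \frac{\rho_n M}{1-|w_n+\rho_n\zeta_n|^2}\to0,
\]
because $|w_n+\rho_n\zeta_n|\to|w_0|<1$. Hence $g^{\#}(\zeta_0)=0$, so $\zeta_0$ is a multiple $a$-point of $g$ (for $a=\infty$, a multiple pole).

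Finally, we derive a contradiction from the second main theorem. Since all $a$-points of $g$ are multiple for each of the five values $a\in E$, we have $\overline N(r,1/(g-a))\le \tfrac12 N(r,1/(g-a))\le\tfrac12T(r,g)+O(1)$. The second main theorem with five values then gives
\[
3T(r,g)\le\sum_{a\in E}\overline N\bigl(r,\tfrac1{g-a}\bigr)+S(r,g)\le\tfrac52T(r,g)+S(r,g),
\]
which forces $g$ to be constant (or rational, and then in fact constant), a contradiction. Equivalently, a nonconstant meromorphic function on $\mathbb{C}$ is totally ramified over at most four values; this is exactly why five points are needed.

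The main obstacle is the rescaling step. Zalcman's lemma must be applied in a form that keeps the limit point $w_0$ in the interior, so that the weight $1-|w_n+\rho_n\zeta|^2$ stays bounded away from $0$. Working with the automorphism-normalized sequence $g_n$, centred at $0$, handles this cleanly. The other point needing care is the Hurwitz argument at $a=\infty$, which is handled by passing to $1/h_n$.
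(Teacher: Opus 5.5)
The paper does not prove Theorem A; it only cites Lappan. So the useful comparisons are with Lappan's original argument and with the paper's proof of Theorem~\ref{th-2.1}.

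Your proof is correct, and it is essentially Lappan's argument:
\begin{enumerate}
\item[(1)] rescale a non-normal $f$ to a nonconstant meromorphic limit $g$ on $\mathbb{C}$;
\item[(2)] use the hypothesis to force $g^{\#}=0$ on $g^{-1}(E)$;
\item[(3)] contradict Nevanlinna's bound of four totally ramified values.
\end{enumerate}
Lappan obtains the rescaling from the Lohwater--Pommerenke theorem, which gives $f(z_n+\rho_n\zeta)\to g$ with $\rho_n/(1-|z_n|)\to 0$. You get the same effect by applying Zalcman's lemma to $f\circ\phi_n$ and using the identity $(f\circ\phi)^{\#}(w)=(1-|\phi(w)|^2)f^{\#}(\phi(w))/(1-|w|^2)$. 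That identity is exactly what lets the hyperbolically weighted bound survive the rescaling.

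The paper's proof of Theorem~\ref{th-2.1} runs the same Zalcman--Hurwitz--second-main-theorem scheme. However, it works with families on a domain and an unweighted bound, so it never needs this M\"obius invariance. Instead, it uses the bounds on $(f^{(j)})^{\#}$ to raise the ramification at $E$-points to $k+1$, which is what lets three points replace five.

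Two minor remarks.
\begin{enumerate}
\item[(a)] Zalcman's lemma applied to $\{g_n\}$ returns functions $g_{k_n}$ that need not form a subsequence. This is harmless, because your estimate is uniform over all automorphisms. You could even apply the lemma directly to $\{f\circ\phi:\phi\in\mathrm{Aut}(\mathbb{D})\}$ and skip Noshiro.
\item[(b)] The rational case at the end should be closed explicitly. For rational $g$ the second main theorem holds with an $O(1)$ error term. Alternatively, by Riemann--Hurwitz, five totally ramified values would force at least $5d/2>2d-2$ critical points for $g$ of degree $d$.
\end{enumerate}
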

Notably, Lappan \cite{Lappan-CMH-1974} demonstrated that the cardinality of the set $E$ in Theorem A is sharp, proving that 'five' cannot be replaced by 'three' and providing cases where 'four' points are insufficient. These findings catalyzed subsequent research on normal functions in both $\mathbb{C}$ and $\mathbb{C}^n$. Following this trajectory, Hinkkanen \cite{Hinkkanen-NZJM-1993} and Lappan \cite{Lappan-RRMPA-1994} independently established analogous results for normal families of meromorphic functions, further solidifying the connection between value distribution and the boundedness of the spherical derivative.
\begin{thmB}\cite{Hinkkanen-NZJM-1993,Lappan-RRMPA-1994}
A family $\mathcal{F}  $ of meromorphic functions in a domain $ D \subset\mathbb{C} $ is normal if, and only if, for each compact set $ K\subset D $, there exist a set $ E=E(K)\subset\hat{\mathbb{C}}$ containing at-least five distinct points and a positive constant $ M=M(K) $ such that 
\begin{equation*}
	\max_{z\in K\cap f^{-1}(E)} (f(z))^{\#}\leq M, \;\;\;\; f\in\mathcal{F}.
\end{equation*}
\end{thmB}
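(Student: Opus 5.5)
Theorem B has two directions; only sufficiency needs real work. For necessity: if $\mathcal{F}$ is normal, Marty's theorem gives, for each compact $K\subset D$, a constant $M(K)$ with $f^{\#}(z)\le M(K)$ for all $z\in K$ and $f\in\mathcal{F}$. This bound holds in particular on $K\cap f^{-1}(E)$ for any five-point set $E$.

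For sufficiency I argue by contradiction using the Zalcman rescaling lemma. Normality is local, so suppose $\mathcal{F}$ fails to be normal at some $z_0\in D$. Fix a closed disk $K=\overline{\Delta(z_0,r)}\subset D$, and let $E=\{a_1,\dots,a_5\}\subset E(K)$ be five distinct points with the bound $M=M(K)$. Zalcman's lemma then gives:
\begin{itemize}
\item functions $f_n\in\mathcal{F}$,
\item points $z_n\to z_0$,
\item radii $\rho_n\to 0^+$,
\end{itemize}
such that $g_n(\zeta)=f_n(z_n+\rho_n\zeta)$ converges spherically locally uniformly on $\mathbb{C}$ to a nonconstant meromorphic $g$. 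This $g$ has bounded spherical derivative, $g^{\#}(\zeta)\le g^{\#}(0)=1$, so $g$ has order at most two.

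The key step is to show that every $a_j$-point of $g$ is multiple. Let $g(\zeta_0)=a_j$ with $a_j$ finite; if $a_j=\infty$, apply the same argument to $1/g$. Since $g\not\equiv a_j$, Hurwitz's theorem gives $\zeta_n\to\zeta_0$ with $g_n(\zeta_n)=a_j$. For large $n$ the points $z_n+\rho_n\zeta_n$ lie in $K\cap f_n^{-1}(E)$. Hence
\[
g_n^{\#}(\zeta_n)=\rho_n f_n^{\#}(z_n+\rho_n\zeta_n)\le \rho_n M\to 0,
\]
so $g^{\#}(\zeta_0)=0$, and therefore $g'(\zeta_0)=0$. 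Thus all $a_j$-points of $g$ have multiplicity at least $2$.

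Now apply Nevanlinna's second main theorem to $g$ and the five values $a_j$:
\[
3T(r,g)\le\sum_{j=1}^5\overline N\Big(r,\tfrac1{g-a_j}\Big)+S(r,g)\le \tfrac12\sum_{j=1}^5 N\Big(r,\tfrac1{g-a_j}\Big)+S(r,g)\le \tfrac52 T(r,g)+S(r,g).
\]
This forces $T(r,g)=S(r,g)$, which is impossible for a nonconstant $g$. (If $g$ is rational, $S(r,g)=O(1)$ and the conclusion is the same. Equivalently, one can invoke the classical fact that a nonconstant meromorphic function cannot have five totally ramified values.) This contradiction proves normality at $z_0$, and hence on $D$.

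The main obstacle is the localization. The set $E=E(K)$ and the constant $M(K)$ depend on $K$, so $K$ must be chosen before rescaling and must contain $z_n+\rho_n\zeta_n$ for all large $n$. This holds because $z_n\to z_0$, which lies in the interior of $K$, while $\rho_n\zeta_n\to 0$. A second, smaller point is that $E(K)$ may be infinite; only five of its points are needed. The count of five is exactly what makes the inequality $\tfrac52<3$ work.
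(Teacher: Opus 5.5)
Your proof is correct. It is the standard Zalcman-rescaling argument: rescale with $\alpha=0$, use Hurwitz and $g_n^{\#}(\zeta_n)\le \rho_n M\to 0$ to make every $a_j$-point of $g$ multiple, then contradict the second main theorem via $3T(r,g)\le \tfrac52 T(r,g)+S(r,g)$. The paper does not prove Theorem B itself (it only cites Hinkkanen and Lappan), but this is exactly the template of its proof of Theorem \ref{th-2.1}, and you correctly adapt it to five points of $\hat{\mathbb{C}}$, including the cases $a_j=\infty$ and rational $g$.
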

In \cite{Tran-Nguyen-CMFT-2017}, Tan and Thin established several normality criteria by considering the simultaneous boundedness of the spherical derivatives of $f, f'$, and $f''$. In the present work, we extend these results to scenarios where the set $E$ contains fewer target points—specifically as few as three—thereby further relaxing the geometric requirements for normality through the inclusion of higher-order derivative.
\begin{thmC}\cite{Tran-Nguyen-CMFT-2017}
Let $\mathcal{F}$ be a family of meromorphic functions in a domain $D\subset\mathbb{C}$. Assume that for each compact $K\subset D$, there exist a set $E=E(K)\subset\mathbb{C}$ consisting of three distinct points and a positive constant $M=M(K)$ such that
\begin{align*}
	f^{\#}(z)\leq M,\;\;\;\; \left(f^{\prime}\right)^{\#}(z)\leq M\;\;\mbox{and}\;\; \left(f^{\prime\prime}\right)^{\#}(z)\leq M,
\end{align*}
for all $f\in\mathcal{F}$ and $z\in K\cap f^{-1}(E)$. Then $\mathcal{F}$ is normal.
\end{thmC}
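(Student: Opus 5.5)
Suppose $\mathcal{F}$ is not normal at some $z_0\in D$, and let $K$ be a closed disk around $z_0$ inside $D$, with $E=E(K)=\{a_1,a_2,a_3\}$ and $M=M(K)$. By Zalcman's lemma (with $\alpha=0$) there are $f_n\in\mathcal{F}$, points $z_n\to z_0$ and $\rho_n\to0^+$ such that
\[
g_n(\xi)=f_n(z_n+\rho_n\xi)\longrightarrow g(\xi)
\]
spherically locally uniformly on $\mathbb{C}$. The limit $g$ is a nonconstant meromorphic function of finite order with $g^{\#}\le g^{\#}(0)=1$. I will show $g$ can take none of $a_1,a_2,a_3$, contradicting Picard's theorem.

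\textbf{Step 1: every $a_j$-point of $g$ is multiple.} Let $g(\xi_0)=a_j$. Since $g\not\equiv a_j$, Hurwitz gives $\xi_n\to\xi_0$ with $g_n(\xi_n)=a_j$. Then $z_n+\rho_n\xi_n\in K\cap f_n^{-1}(E)$ for large $n$, so
\[
g_n^{\#}(\xi_n)=\rho_n f_n^{\#}(z_n+\rho_n\xi_n)\le \rho_n M\to0 .
\]
Hence $g^{\#}(\xi_0)=0$ and $g'(\xi_0)=0$.

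\textbf{Step 2: $f_n'$ is small at these points.} This step is the heart of the proof and needs the extra hypotheses on $f'$ and $f''$. Because $f_n^{\#}\le M$ at $z_n+\rho_n\xi_n$ and $f_n=a_j$ is finite there, we get $|f_n'(z_n+\rho_n\xi_n)|\le M(1+|a_j|^2)$. The hypothesis $(f_n')^{\#}\le M$ at the same point then gives
\[
|f_n''(z_n+\rho_n\xi_n)|\le M\bigl(1+M^2(1+|a_j|^2)^2\bigr),
\]
and $(f_n'')^{\#}\le M$ bounds $f_n'''$ there as well. In the rescaled variable,
\[
g_n^{(k)}(\xi_n)=\rho_n^k f_n^{(k)}(z_n+\rho_n\xi_n)=O(\rho_n^k),
\]
but this only reproduces $g^{(k)}(\xi_0)=0$ for $k=1,2,3$. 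So at every $a_j$-point of $g$ the multiplicity is at least $4$, provided the convergence of derivatives is legitimate. It is, since near $\xi_0$ the limit $g$ is holomorphic and the convergence is locally uniform.

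\textbf{Step 3: Nevanlinna count.} The Second Main Theorem gives
\[
T(r,g)\le \sum_{j=1}^3 \overline{N}\bigl(r,1/(g-a_j)\bigr)+S(r,g)\le \tfrac34\,T(r,g)+S(r,g),
\]
which is a contradiction. Multiplicity $\ge 4$ at each of the three points is more than enough: multiplicity $\ge 3$ already yields the coefficient $1$ (hence $T\le T+S$, inconclusive only at the boundary), and $\ge4$ gives the strict inequality.

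\textbf{Main obstacle.} It is whether Step 2 actually uses the hypotheses on $f'$ and $f''$ essentially, since the $O(\rho_n^k)$ bound only needs finiteness of the derivatives. I would check carefully that the pointwise bounds on $f_n''$ and $f_n'''$ really come from the hypotheses and are uniform in $n$. If the argument does not close cleanly, the alternative is to rescale $f_n$ with Zalcman weight $\alpha=-2$ so that $f_n''$ is the dominant term, analyse the limit of $\rho_n^{-2}f_n$, and use the three-point condition on $f_n''$ directly.
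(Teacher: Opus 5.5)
Your argument is correct and is essentially the paper's own proof: Theorem C is the case $k=3$ of Theorem 2.1, and the steps match. Both use Zalcman rescaling with $\alpha=0$, Hurwitz points $\xi_n\to\xi_0$, and the chained bounds $|f_n'|\le M(1+|a_j|^2)$, $|f_n''|\le M(1+|f_n'|^2)$, $|f_n'''|\le M(1+|f_n''|^2)$ at $z_n+\rho_n\xi_n$ to force $g'(\xi_0)=g''(\xi_0)=g'''(\xi_0)=0$, followed by the Second Main Theorem with coefficient $3/4$. The only difference is that the paper phrases the middle step as $(g_n^{(l)})^{\#}(\xi_n)\to 0$ for $l=0,1,2$, where you use the more direct $g_n^{(i)}(\xi_n)=\rho_n^{i}f_n^{(i)}(z_n+\rho_n\xi_n)=O(\rho_n^{i})$ for $i=1,2,3$.

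The worry in your last paragraph is unfounded. The bound $O(\rho_n^{i})$ needs $f_n^{(i)}(z_n+\rho_n\xi_n)$ bounded uniformly in $n$, and mere finiteness would not do. That uniformity is exactly what the hypotheses on $(f')^{\#}$ and $(f'')^{\#}$ supply, so no other rescaling is needed. In any case, one with $|\alpha|=2$ would not be admissible in Zalcman's lemma when zeros and poles may be simple. Finally, the contradiction comes from the Second Main Theorem, not from $g$ omitting $a_1,a_2,a_3$ as your opening line announces.
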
 
In 1971, Hayman and Storvick \cite{Hamymen-Storvick-BLMS-1971} demonstrated that the normality of a function does not necessarily imply the normality of its derivatives. Conversely, examples exist of non-normal functions whose derivatives are themselves normal \cite{Hamymen-Storvick-BLMS-1971, Lappan-AASF-1977}. For an extensive treatment of the normality of derivatives, we refer the reader to \cite{Lappan-MA-1978, Yamashita-MZ-1975} and the references therein. Regarding higher-order derivatives, Lappan \cite{Lappan-MA-1978} proved that for analytic functions, the normality of a given derivative cannot be predicted by the status of its preceding or succeeding derivatives, further illustrating the complex analytic independence of derivative growth. \vspace{1.5mm}

We know that Pang and Zalcman \cite{Pang-Zalcman-1999} obtained a result on normality of a family $ \mathcal{F} $ of holomorphic functions $ f $ assuming that $ f^nf^{{(k)}}-a $ (for a given constant $ a $) is non-vanishing for all $f\in\mathcal{F}$ in a domain $ \Omega $. In case when the set $ E $ is a singleton, the following result is obtained in \cite{Tran-Nguyen-CMFT-2017} concerning differential polynomials of a certain form. In fact, the authors \cite{Tran-Nguyen-CMFT-2017} obtained the following result as an extension of Pang and Zalcman's theorem.

\begin{thmD}\cite{Tran-Nguyen-CMFT-2017}
Let $ n,k $ be positive integers such that $ n>k + 3+ \frac{2}{k}$. Let $ \mathcal{F} $ be a family of meromorphic functions in a domain $ \Omega\subset\mathbb{C} $ all of whose zeros have multiplicity at least $ k $. Assume that for each compact subset $ K\subset \Omega $, there exist $ a\in \mathbb{C}\setminus\{0\} $ and a positive constant $ M=M(K) $ such that $ \left(f^{n}f^{(k)}\right)^{\#}\leq M $  for all $ f\in \mathcal{F} $ and $ z\in K\cap \{f^{n}f^{(k)}=a\} $. Then $ \mathcal{F} $ is normal.
\end{thmD}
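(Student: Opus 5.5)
We argue by contradiction via the Zalcman--Pang rescaling lemma, using the exponent $\alpha=k/(n+1)$. This exponent makes the operator $f^nf^{(k)}$ invariant under rescaling, and the proof ends with a Nevanlinna-theoretic contradiction for the limit function.

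\textbf{Rescaling.} Suppose $\mathcal{F}$ is not normal at some $z_0\in\Omega$. Since $0\le\alpha=k/(n+1)<1$, the Zalcman--Pang lemma gives $f_j\in\mathcal{F}$, $z_j\to z_0$ and $\rho_j\to0^+$ such that
\[
g_j(\xi)=\rho_j^{-\alpha}f_j(z_j+\rho_j\xi)
\]
converges spherically locally uniformly on $\mathbb{C}$. The limit $g$ is a nonconstant meromorphic function of finite order with $g^{\#}\le 1$, and all its zeros have multiplicity at least $k$, by Hurwitz. Because $\alpha(n+1)=k$, we have
\[
G_j:=g_j^n g_j^{(k)}(\xi)=F_j(z_j+\rho_j\xi),\qquad F_j:=f_j^nf_j^{(k)},
\]
and $G_j\to\Psi:=g^ng^{(k)}$ away from the poles of $g$.

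\textbf{Non-degeneracy of $\Psi$.} We next show $\Psi\not\equiv a$ and $\Psi$ nonconstant.
\begin{itemize}
\item If $\Psi\equiv a\neq0$, then $g$ has no zeros, since $n\ge1$. It also has no poles, since a pole of order $p$ would give $\Psi$ a pole of order $(n+1)p+k$.
\item Hence $g=e^{P}$ with $P$ a polynomial, and $g^{(k)}=Q e^{P}$ with $Q$ a polynomial in $P'$ and its derivatives. Then $Q\,e^{(n+1)P}\equiv a$ forces $P$ to be constant, which contradicts $g$ nonconstant.
\item The same argument excludes $\Psi\equiv c$ for any constant $c\ne 0$. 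The case $\Psi\equiv0$ is impossible because $g\not\equiv0$ and $g$ is not a polynomial of degree $<k$; a polynomial of degree $<k$ cannot have a zero of multiplicity $\ge k$ unless it is constant.
\end{itemize}

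\textbf{Every $a$-point of $\Psi$ is multiple.} Let $\Psi(\xi_0)=a$. By Hurwitz there are $\xi_j\to\xi_0$ with $G_j(\xi_j)=a$, so $z_j+\rho_j\xi_j\in f_j^{-1}$-preimage of $\{F_j=a\}$ lies in a compact $K\ni z_0$. Then
\[
|G_j'(\xi_j)|=\rho_j|F_j'(z_j+\rho_j\xi_j)|=\rho_j(1+|a|^2)F_j^{\#}(z_j+\rho_j\xi_j)\le \rho_j(1+|a|^2)M\to0,
\]
so $\Psi'(\xi_0)=0$.

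\textbf{Nevanlinna contradiction.} By the second main theorem for $\Psi$ with the values $0,\infty,a$,
\[
T(r,\Psi)\le \overline N(r,\Psi)+\overline N\!\left(r,\tfrac1\Psi\right)+\tfrac12 N\!\left(r,\tfrac1{\Psi-a}\right)+S(r,g).
\]
The terms on the right are controlled as follows.
\begin{itemize}
\item $\overline N(r,\Psi)=\overline N(r,g)$.
\item Since zeros of $g$ have multiplicity $\ge k$, we have $\overline N(r,1/g)\le \frac1k N(r,1/g)$.
\item The zeros of $g^{(k)}$ that are not zeros of $g$ are estimated through $g^{(k)}/g$, using $m(r,g^{(k)}/g)=S(r,g)$.
\end{itemize}
For the lower bound on $T(r,\Psi)$, write $\Psi=g^{n+1}\cdot(g^{(k)}/g)$. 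This gives
\[
(n+1)T(r,g)\le T(r,\Psi)+N\!\left(r,\tfrac{g}{g^{(k)}}\right)+S(r,g),
\]
and the last counting function is bounded in terms of $\overline N(r,g)$ and $N(r,1/g^{(k)})$. Combining these estimates should yield
\[
\bigl(n-k-3-\tfrac2k\bigr)T(r,g)\le S(r,g),
\]
which contradicts $n>k+3+\frac2k$.

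\textbf{Main obstacle.} The delicate step is this final bookkeeping. A crude use of $T(r,g^{(k)})\le (k+1)T(r,g)$ only gives a threshold of roughly $3k+5$. Reaching the stated sharper constant $k+3+2/k$ requires three refinements:
\begin{itemize}
\item counting poles of $\Psi$ by multiplicity $(n+1)p+k$ against their reduced count;
\item applying the multiplicity-$k$ hypothesis on zeros of $g$ inside both $N(r,1/\Psi)$ and $N(r,1/g^{(k)})$;
\item absorbing the multiple-point term $N_0(r,1/\Psi')$ via the multiple $a$-points.
\end{itemize}
I would first try a Milloux-type inequality applied directly to $\Psi$, and only if that falls short return to the second main theorem with these refined counts.
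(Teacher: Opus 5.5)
\paragraph{Gap in the final step.} Your rescaling with $\alpha=k/(n+1)$ matches the paper's proof of Theorem~\ref{th-2.2} specialized to $M[f]=f^nf^{(k)}$. So do the invariance $G_j=F_j(z_j+\rho_j\xi)$, the non-constancy of $\Psi=g^ng^{(k)}$, and the argument that $\Psi'=0$ at every $a$-point. The gap is the final Nevanlinna step, which is the only place where $n>k+3+\frac2k$ is used. You say the estimates ``should yield'' $(n-k-3-\frac2k)T(r,g)\le S(r,g)$, but you never derive it. Moreover, the one concrete inequality you offer does not follow from your factorization. From $g^{n+1}=\Psi\cdot g/g^{(k)}$ you get $T(r,g/g^{(k)})$, not $N(r,g/g^{(k)})$, and $m(r,g/g^{(k)})$ is not $S(r,g)$. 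What the factorization actually gives is $T(r,g/g^{(k)})=N(r,g^{(k)}/g)+S(r,g)=k\overline N(r,g)+k\overline N(r,1/g)+S(r,g)$. Fed into the second main theorem for $\Psi$, this only yields a threshold of order $3k$ (about $n>3k+4+\frac2k$), not $k+3+\frac2k$.

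\paragraph{How the paper closes it.} The paper avoids this bookkeeping by invoking Lemma~\ref{lem-2.3}, Hinchliffe's Hayman--Chuang-type inequality for non-constant meromorphic $g$, with $P=\Psi$, $d(P)=n+1$ and $\theta(P)=k$. That inequality bounds $T(r,g)$ directly and contains no pole term, so even the crude bound $T(r,\Psi)\le(n+k+1)T(r,g)+S(r,g)$ suffices:
\[
T(r,g)\le\frac{k+1}{n}\overline N\left(r,\frac1g\right)+\frac1n\overline N\left(r,\frac1{\Psi-a}\right)+S(r,g)\le\left(\frac{k+1}{kn}+\frac{n+k+1}{2n}\right)T(r,g)+S(r,g).
\]
The bracket is $<1$ exactly when $n>k+3+\frac2k$.

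\paragraph{Repairing your route.} Your second-main-theorem approach also works, provided the lower bound comes from $m(r,1/g^{n+1})\le m(r,1/\Psi)+S(r,g)$. That gives
\[
(n+1)T(r,g)\le T(r,\Psi)+k\overline N\left(r,\frac1g\right)-N_0\left(r,\frac1{g^{(k)}}\right)+S(r,g),
\]
where $N_0$ counts zeros of $g^{(k)}$ that are not zeros of $g$. Add to this the bound $T(r,\Psi)\le2\overline N(r,g)+2\overline N(r,1/g)+2\overline N_0(r,1/g^{(k)})+S(r,g)$, which comes from the second main theorem and the multiplicity of the $a$-points. Then use $N_0(r,1/g^{(k)})\le k\overline N(r,g)+k\overline N(r,1/g)+S(r,g)$. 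The result is $(n+1)T(r,g)\le(k+2)\overline N(r,g)+(2k+2)\overline N(r,1/g)+S(r,g)\le(k+4+\frac2k)T(r,g)+S(r,g)$, which is the same threshold. The negative $N_0$ term is the ingredient your sketch is missing.
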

\subsection{Further perspectives on normality and differential operators}
While Lappan's Five-Value theorem (Theorem A) and later results (Theorems B and C) established normality criteria for meromorphic functions based on spherical derivatives, there was a remaining question (Problem 1.1) regarding whether these criteria still hold when the spherical derivatives of higher-order derivatives ($f, f', f'', \dots, f^{(k-1)}$) are bounded above. Additionally, there was a need to generalize existing results (Theorem D) to deal with arbitrary differential monomials $M[f]$ rather than just specific forms like $f^n f^{(k)}$.\vspace{1.2mm}

 Before we go into details, we now recall first the definition of differential monomials generated by meromorphic functions.
\begin{definition}\label{def-2.1}\cite{Doeringer-PJM-1982}
Let $ f $ be a meromorphic function in the complex plane $\mathbb{C} $ and $ n_0,n_1,\cdots,n_k $ be non-negative integers. We call 
\begin{equation*}
	M[f(z)]=f^{n_0}(z)(f^{\prime}(z))^{n_1}\cdots (f^{(k)}(z))^{n_k}
\end{equation*}	
a differential monomial generated by $ f $, where $ d_M:=n_0+n_1+\cdots+n_k $ its degree and $\Gamma_M:=n_0+2n_1+\cdots+(k+1)n_k  $ its weight, and $ D_M:=\Gamma_M-d_M=n_1 + 2n_2 +\cdots+ kn_k. $
\end{definition}
A thorough examination of the above Theorem C is essential to derive a more comprehensive generalization, inevitably prompting a natural inquiry.
\begin{prob}\label{que-1.2}
	Does the normality criterion still hold in the case where the spherical derivatives of $f, f^{\prime}, f^{\prime\prime},\cdots,f^{(k-1)}$ are bounded above?
\end{prob}
The results of Tan and Thin's study \cite{Tran-Nguyen-CMFT-2017} prompt us to investigate Theorem D further to achieve its generalized version, leading to a natural query.
\begin{prob}\label{que-1.3}
Can we generalize Theorem D to deal with arbitrary differential monomials $M[f(z)]$?
\end{prob}
This paper addresses these problems by establishing two primary normality criteria. First, we provide a criterion for families in which the higher-order spherical derivatives $(f^{(i)})^{\#}$ are bounded on sets containing fewer than five points, specifically reducing the requirement to three points. Furthermore, we generalize the Pang–Zalcman theorem by proving a normality criterion for a broad class of differential monomials $M[f]$ under the condition that the degree $d_{M}$ and weight $D_{M}$ satisfy 
\begin{align*}
	d_{M} > \frac{(k+2)D_{M} + 2(k+1)}{k}.
\end{align*} The primary objective of this study is to provide comprehensive answers to Problems \ref{que-1.2} and \ref{que-1.3}. To ensure a systematic presentation, the manuscript is organized as follows: Section 2 details the main results and their implications. Section 3 introduces the necessary preliminary lemmas and provides rigorous proofs of the principal theorems. This structured approach highlights the fundamental properties governing the normality of families of meromorphic functions.

\section{\bf Main Results}
In this section, we establish normality criteria for families of meromorphic functions where the spherical derivatives of the derivatives $f, f', f'', \dots, f^{(k-1)}$, or the spherical derivatives of general differential monomials, are uniformly bounded. Our primary result, stated below, provides a comprehensive answer to Problem \ref{que-1.2} by reducing the required number of points in the set $E$ from five to three. 
\begin{theorem}\label{th-2.1}
Suppose that $k$ be a positive integer such that $k>2$. Let $\mathcal{F}$ be a family of meromorphic functions in a domain $D\subset\mathbb{C}$. Assume that for each compact $K\subset D$, there exist a set $E=E(K)\subset\mathbb{C}$ consisting of three distinct points and a positive constant $M=M(K)$ such that
\begin{align*}
	\left(f^{(j)}\right)^{\#}(z)\leq M, \;\;\;\;\;\mbox{where} \;\; j=0,1,\ldots,(k-1),
\end{align*}
for all $f\in\mathcal{F}$ and $z\in K\cap f^{-1}(E)$. Then $\mathcal{F}$ is normal.
\end{theorem}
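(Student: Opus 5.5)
Since Theorem C is exactly the case $k=3$, the approach is the same argument with more derivatives: a Zalcman rescaling of a non-normal family, a reduction of the bounded spherical derivatives to a value-sharing condition on the limit, and a Nevanlinna argument ruling out the limit. Suppose $\mathcal{F}$ is not normal at some $z_0\in D$. Take a compact neighbourhood $K$ of $z_0$ with its set $E=\{a_1,a_2,a_3\}$ and constant $M$.

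\emph{Rescaling.} Zalcman's lemma gives $f_n\in\mathcal{F}$, points $z_n\to z_0$ and radii $\rho_n\to0^+$ such that
\[
g_n(\zeta)=f_n(z_n+\rho_n\zeta)\to g(\zeta)
\]
spherically locally uniformly on $\mathbb{C}$. Here $g$ is a nonconstant meromorphic function of finite order with $g^{\#}\le g^{\#}(0)=1$.

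\emph{Translating the hypotheses to $g$.} Fix $a\in E$ and a point $\zeta_0$ with $g(\zeta_0)=a$. By Hurwitz there are $\zeta_n\to\zeta_0$ with $g_n(\zeta_n)=a$, so $z_n+\rho_n\zeta_n\in K\cap f_n^{-1}(E)$.
\begin{itemize}
\item From $f_n^{\#}\le M$ we get $g_n^{\#}(\zeta_n)=\rho_n f_n^{\#}\le \rho_n M\to0$. Hence $g'(\zeta_0)=0$, so every $a$-point of $g$ is multiple.
\item Near $\zeta_0$ the $g_n$ are holomorphic and converge with all derivatives, so $g_n^{(j)}(\zeta)=\rho_n^j f_n^{(j)}(z_n+\rho_n\zeta)$.
\item Boundedness of $(f_n^{(j)})^{\#}$ at $z_n+\rho_n\zeta_n$ gives
\[
|f_n^{(j+1)}|\le M\bigl(1+|f_n^{(j)}|^2\bigr).
\]
\item I will argue inductively in $j=1,\dots,k-1$. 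If $f_n^{(j)}(z_n+\rho_n\zeta_n)$ stays bounded, then $g^{(j+1)}(\zeta_0)=\lim \rho_n^{j+1}f_n^{(j+1)}=0$. If it is unbounded along a subsequence, then $g^{(j)}(\zeta_0)=\lim\rho_n^j f_n^{(j)}$ is finite, so $|f_n^{(j)}|\le C\rho_n^{-j}$ and
\[
|g_n^{(j+1)}|\le \rho_n^{j+1}M\bigl(1+C^2\rho_n^{-2j}\bigr)\approx MC^2\rho_n^{1-j}.
\]
This bound does not tend to $0$ for $j\ge2$.
\end{itemize}

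\emph{Main obstacle.} The real difficulty is to show the unbounded case cannot occur, i.e.\ that $f_n^{(j)}$ stays bounded at the preimage points. I would try induction: if $g^{(1)},\dots,g^{(j)}$ vanish at $\zeta_0$, then $g-a$ has a zero of order $\ge j+1$. Then $f_n^{(j)}(z_n+\rho_n\zeta_n)=\rho_n^{-j}g_n^{(j)}(\zeta_n)$ with $g_n^{(j)}(\zeta_n)\to0$. To close the induction I would use the quantitative rate from the $j-1$ step: that step gives $|f_n^{(j)}|\le M(1+|f_n^{(j-1)}|^2)=O(1)$ whenever $f_n^{(j-1)}$ is bounded there. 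This does close, because $f_n^{\#}\le M$ at the point makes $|f_n'|$ bounded (as $f_n=a$ is finite), hence $|f_n''|\le M(1+|f_n'|^2)$ is bounded, and so on. So all $f_n^{(j)}$, $j\le k$, are bounded at these points, and $g^{(j)}(\zeta_0)=\lim\rho_n^j f_n^{(j)}=0$ for $1\le j\le k$. Thus every $a_i$-point of $g$ has multiplicity at least $k+1$.

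\emph{Contradiction.} By Nevanlinna's second main theorem for the three finite values $a_1,a_2,a_3$,
\[
T(r,g)\le \sum_{i=1}^3\overline N\Bigl(r,\tfrac1{g-a_i}\Bigr)+S(r,g)\le \tfrac{3}{k+1}T(r,g)+S(r,g).
\]
For $k>2$ we have $\tfrac{3}{k+1}<1$, so $g$ is constant, contradicting $g^{\#}(0)=1$. In this form only $k\ge 3$ is needed, since already multiplicity $\ge 4$ at three values forbids nonconstant $g$. So I expect the careful bookkeeping of the induction to be the only delicate step.
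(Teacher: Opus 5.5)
Your proof is correct and follows the paper's argument: Zalcman rescaling with $\alpha=0$, then the chained bounds $|f_n^{(j+1)}|\le M(1+|f_n^{(j)}|^2)$ at the preimage points, starting from $|f_n|=|a|$ (these are the paper's constants $B_1,\dots,B_{k-1}$), which force $g'(\zeta_0)=\dots=g^{(k)}(\zeta_0)=0$, and finally the second main theorem for three values of multiplicity at least $k+1$. Your exploratory ``unbounded case'' is vacuous precisely because of that chain, so you can drop it and read off $g^{(j)}(\zeta_0)=\lim\rho_n^j f_n^{(j)}(z_n+\rho_n\zeta_n)=0$ directly, which is slightly more direct than the paper's route through $(g_m^{(j)})^{\#}(\xi_m)\to 0$.
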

\begin{remark}
Theorem \ref{th-2.1} serves as a significant extension of Theorem C; specifically, when $k=3$, the latter is recovered as a special case of our general result. Furthermore, this theorem refines the criteria established in Theorem B by demonstrating that normality can be maintained even when the set $E$ contains fewer than five points.
\end{remark}
To illustrate the applicability of Theorem 2.1, we present a case involving a family of exponential functions. This example demonstrates how the constraints on a finite set of points effectively regulate the growth of higher-order spherical derivatives, leading to the normality of the family.
\begin{example}
	Consider the family $\mathcal{F} = \{f_n(z) = e^{nz} : n \in \mathbb{N} \}$ and set $k=3$. We analyze the spherical derivatives of $f_n$, $f_n'$, and $f_n''$. For the first term, the spherical derivative is given by $f_n^{\#}(z) = \frac{n|e^{nz}|}{1+|e^{nz}|^2}$, which is bounded above by $n/2$ for all $z \in \mathbb{C}$. While this bound depends on $n$, Theorem \ref{th-2.1} requires boundedness only on the set $K \cap f^{-1}(E)$, where $E$ contains three distinct values. If the values of $f_n$ are constrained at three distinct points, the exponential growth parameter $n$ is restricted, thereby ensuring the uniform boundedness of $(f_n^{(j)})^{\#}$ on the specified set for $j=0,1,2$. Consequently, the family $\mathcal{F}$ satisfies the normality criteria established in the theorem.
\end{example}
The following example explores the necessity of the condition $k > 2$ and establishes the sharpness of our result. By considering a sequence of linear functions, we show that the theorem fails for $k=1$, thereby highlighting the critical role played by higher-order derivatives in ensuring normality with a reduced number of target points.
\begin{example}
	The condition $k > 2$ in Theorem \ref{th-2.1} is essential. Indeed, for $k=1$, the result reduces to a scenario related to Lappan's five-value theorem. Consider the family $\mathcal{F} = \{f_n(z) = nz : n \in \mathbb{N}\}$. Here, $f_n'(z) = n$ and $f_n''(z) = 0$. The spherical derivative of the functions in $\mathcal{F}$ is given by$$f_n^{\#}(z) = \frac{n}{1+n^2|z|^2}.$$At the origin, $f_n^{\#}(0) = n \to \infty$ as $n \to \infty$, indicating that the family is not normal at $z=0$. This example demonstrates that merely bounding the spherical derivative of the first derivative is insufficient to guarantee normality when the cardinality of the set $E$ is small (e.g., $|E| < 5$). This justifies the necessity of involving higher-order derivatives ($k > 2$) to achieve normality with fewer points in $E$.
\end{example}
In response to Problem \ref{que-1.3}, we establish a theorem that serves as a far-reaching generalization of both Theorem D and the classical Pang–Zalcman normality criterion \cite{Pang-Zalcman-1999}. By relaxing previous constraints on the structure of the differential monomials, Theorem \ref{th-2.2} provides a conclusive and unified approach to the normality of meromorphic families based on the boundedness of spherical derivatives.
\begin{theorem}\label{th-2.2}
Let k, $ d_M $ and $ D_M $ be positive integers such that 
\begin{align}\label{Eq-2.1}
		d_M > \frac{(k+2)D_M + 2(k+1)}{k}.
\end{align}
Let $ \mathcal{F} $ be a family of meromorphic functions in a domain $ D\subset\mathbb{C} $ all of whose zeros have multiplicity at-least $ k $. Assume that for each compact subset $ H\subset D $, there exist $ a\in \mathbb{C}\setminus\{0\} $ and a positive constant $ M^*=M^*(H) $ such that 
\begin{align*}
	\left(M[f(z)]\right)^{\#}\leq M^*,
\end{align*}
for all $ f\in \mathcal{F} $ and $ z\in H\cap \{M[f(z)]=a\} $. Then $ \mathcal{F} $ is normal.
\end{theorem}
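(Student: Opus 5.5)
We argue by contradiction via the Zalcman--Pang rescaling lemma, in the form with a weight exponent $-1<\alpha<k$ (valid because all zeros of $f\in\mathcal{F}$ have multiplicity at least $k$). Suppose $\mathcal{F}$ is not normal at some $z_0\in D$, and fix a closed disk $H$ around $z_0$ with its constants $a\neq0$ and $M^*$. We choose
\[
\alpha=\frac{D_M}{d_M},
\]
which is the exponent making $M[\cdot]$ scale-invariant. Since $D_M\le k\,d_M$ and $n_k$ is not all of the degree, we have $0\le\alpha<k$.

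Applying the lemma gives $f_j\in\mathcal{F}$, $z_j\to z_0$ and $\rho_j\to0^+$ such that
\[
g_j(\zeta)=\rho_j^{-\alpha}f_j(z_j+\rho_j\zeta)\to g
\]
locally uniformly in the spherical metric. Here $g$ is a nonconstant meromorphic function of finite order on $\mathbb{C}$, all zeros of $g$ have multiplicity at least $k$, and $g^{\#}\le g^{\#}(0)=1$. By the choice of $\alpha$,
\[
M[g_j](\zeta)=\rho_j^{\Gamma_M-d_M-\alpha d_M}M[f_j](z_j+\rho_j\zeta)=M[f_j](z_j+\rho_j\zeta),
\]
and $M[g_j]\to M[g]$ away from the poles of $g$.

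\textbf{Step 1: $M[g]\equiv a$ is impossible, and $M[g]-a$ has zeros.} Suppose first $M[g]\equiv a$. Then $g$ has no poles, and since $a\ne0$ and $n_0\ge1$ (implied by \eqref{Eq-2.1}), it has no zeros. So $g=e^{P}$ with $P$ a polynomial, and $M[g]=e^{d_MP}Q$ for a polynomial $Q$, which is nonconstant. This contradiction shows $M[g]\not\equiv a$.

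Next we show $M[g]-a$ has zeros. The Nevanlinna estimate to aim for is
\[
d_M\,T(r,g)\le \overline N(r,g)+N\Big(r,\frac1{g}\Big)\text{-type terms}+\overline N\Big(r,\frac1{M[g]-a}\Big)+S(r,g),
\]
obtained from the second main theorem applied to $M[g]$ together with the standard comparison between $T(r,M[g])$ and $T(r,g)$. Zeros of $g$ of multiplicity at least $k$ contribute at most $\frac{D_M+?}{k}$ per unit of $N(r,1/g)$, and poles contribute with factor related to $D_M+1$. If $M[g]-a$ had no zeros, these weights would give
\[
\Big(d_M-\tfrac{(k+2)D_M+2(k+1)}{k}\Big)T(r,g)\le S(r,g),
\]
contradicting \eqref{Eq-2.1}.

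\textbf{Step 2: local contradiction.} Let $\zeta_0$ be a zero of $M[g]-a$. By Hurwitz's theorem there are $\zeta_j\to\zeta_0$ with $M[f_j](z_j+\rho_j\zeta_j)=a$, and $z_j+\rho_j\zeta_j\in H$ for large $j$. The hypothesis then gives $(M[f_j])^{\#}(z_j+\rho_j\zeta_j)\le M^*$. On the other hand,
\[
(M[g_j])^{\#}(\zeta_j)=\rho_j\,(M[f_j])^{\#}(z_j+\rho_j\zeta_j)\le\rho_jM^*\to0.
\]
Hence $(M[g])^{\#}(\zeta_0)=0$, so $(M[g])'(\zeta_0)=0$ and every $a$-point of $M[g]$ is multiple. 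We now redo the estimate of Step 1 with $\overline N(r,1/(M[g]-a))\le\frac12N(r,1/(M[g]-a))\le\frac12T(r,M[g])+O(1)$. Using $T(r,M[g])\le\Gamma_M T(r,g)$-type bounds, this should again contradict \eqref{Eq-2.1}; the "$2(k+1)$" term in \eqref{Eq-2.1} presumably accounts for this halving.

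\textbf{Main obstacle.} The hard part is the precise Nevanlinna counting in Steps 1 and 2. Specifically, one must bound $N(r,1/M[g])$ from the zeros of $g$ using the multiplicity at least $k$ (and handle the possibility $n_k$-factors vanishing), and compare $T(r,M[g])$ with $d_M T(r,g)$ via $m(r,M[g]/g^{d_M})=S(r,g)$. Carrying out the Milloux-type inequality carefully so that the final coefficient is exactly $d_M-\frac{(k+2)D_M+2(k+1)}{k}$ is where I expect the most work.
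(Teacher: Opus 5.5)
Your overall architecture is the paper's: rescale with $\alpha=D_M/d_M$ so that $M[g_j](\zeta)=M[f_j](z_j+\rho_j\zeta)$. Then use Hurwitz and $(M[g_j])^{\#}(\zeta_j)\le\rho_jM^*$ to see that every $a$-point of $M[g]$ is multiple, and contradict \eqref{Eq-2.1} by a Nevanlinna estimate. Your exclusion of $M[g]\equiv a$ is fine, and cleaner than the paper's. The estimate does need $M[g]$ nonconstant, but that follows from the same argument for any nonzero constant, plus $g^{(k)}\not\equiv 0$.

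The gap is that the estimate which actually produces the threshold is never established, and your guess at its shape is wrong. You leave the zero weight as ``$\frac{D_M+?}{k}$'' and attach a weight ``related to $D_M+1$'' to the poles. The inequality actually needed is the paper's Lemma~\ref{lem-2.3} (Hinchliffe; valid for every nonconstant meromorphic $g$ by Dethloff--Tan--Thin):
\[
(d_M-1)\,T(r,g)\le (D_M+1)\,\overline N\Bigl(r,\frac1g\Bigr)+\overline N\Bigl(r,\frac1{M[g]-a}\Bigr)+S(r,g).
\]
It is exactly the Milloux computation you allude to. With $F=M[g]$, one has $d_Mm(r,1/g)\le m(r,1/F)+S(r,g)$, and then $m(r,1/F)\le N(r,1/(F-a))+\overline N(r,F)-N(r,1/F')+S(r,g)$. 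At a zero of $g$ of order $m\ge k$, $F$ vanishes to order exactly $md_M-D_M$, so these points contribute $(D_M+1)\overline N(r,1/g)$. The poles enter only through $\overline N(r,F)=\overline N(r,g)\le T(r,g)$, with coefficient $1$, and this is what converts $d_M$ into $d_M-1$. Since poles of $g$ are not assumed multiple, a pole weight like $D_M+1$, as you suggest, would not yield \eqref{Eq-2.1}.

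With that inequality the rest is short. You have $\overline N(r,1/g)\le\frac1kT(r,g)$ and $\overline N(r,1/(M[g]-a))\le\frac12T(r,M[g])+O(1)\le\frac{d_M+D_M}{2}T(r,g)+S(r,g)$. These give $\bigl(d_M-1-\frac{D_M+1}{k}-\frac{d_M+D_M}{2}\bigr)T(r,g)\le S(r,g)$. The bracket is positive exactly when $kd_M>(k+2)D_M+2(k+1)$, i.e.\ under \eqref{Eq-2.1}. Your separate step showing that $M[g]-a$ has zeros is unnecessary: if it has none, the counting term is zero and the same inequality already gives the contradiction.
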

\begin{remark}\label{rem-2.1}
In particular, by setting $n_0=n$, $n_k=1$, and $n_j=0$ for $1 \leq j \leq k-1$, the differential monomial $M[f]$ reduces to $f^n f^{(k)}$. Under these parameters, the condition \eqref{Eq-2.1} simplifies to 
\begin{align*}
	n > k + 3 + \frac{2}{k}.
\end{align*} Consequently, Theorem D is recovered as a specific corollary of Theorem \ref{th-2.2}, demonstrating the generality of our result.
\end{remark}
We now provide a concrete application of Theorem \ref{th-2.2} involving a specific differential monomial to verify the established degree threshold. This example confirms that when the degree $d_M$ satisfies the required inequality, the spherical derivative remains bounded at the $a$-points, consistent with our theoretical findings.
\begin{example}
	Consider the differential monomial $M[f] = f^n f^{(k)}$. Under these parameters, the condition \eqref{Eq-2.1} simplifies to $n > k + 3 + \frac{2}{k}$. For $k=1$, this requires $n > 6$. Let $n=7$ and consider the family $\mathcal{F} = \{f_m(z) = mz\}_{m \in \mathbb{N}}$. Then $M[f_m(z)] = m^8 z^7$. For any $a \in \mathbb{C} \setminus \{0\}$, the $a$-points of $M[f_m]$ are given by $z_{m,j} = \left(\frac{a}{m^8}\right)^{1/7}\omega^j$, where $\omega$ is a seventh root of unity. A direct calculation shows that $(M[f_m(z_{m,j})])^{\#}$ remains bounded as $m \to \infty$, as the high degree $d_M$ of the monomial effectively suppresses the growth of the spherical derivative at these points. This illustrates the role of the degree threshold in maintaining the normality of the family $\mathcal{F}$.
\end{example}
Finally, we present a counter-example to justify the necessity of the degree-to-weight ratio stipulated in Theorem \ref{th-2.2}. By examining a case where this threshold is violated, we demonstrate that the boundedness of the spherical derivative alone is insufficient to guarantee normality, thus affirming the indispensability of the condition in \eqref{Eq-2.1} .
\begin{example}
	To illustrate the necessity of the degree threshold in Theorem 2.2, consider the case where condition \eqref{Eq-2.1} is violated. Let $f_m(z) = mz - a$ be a sequence of functions in $\mathcal{F}$. We define the differential monomial $M[f] = f'$, which implies parameters $d_M = 1$, $D_M = 1$, and $k = 1$. In this instance, we see that $\frac{(k+2)D_M + 2(k+1)}{k}$ evaluates to $7$, and the condition $d_M > 7$ is clearly not satisfied. For this family, $M[f_m] = m$, and the spherical derivative vanishes identically, i.e., $(M[f_m])^{\#} = 0$. Although the spherical derivative is bounded on the set $\{z : M[f_m(z)] = a\}$, the family $\mathcal{F}$ fails to be normal as $f_m(z) \to \infty$ as $m \to \infty$. This confirms that the degree threshold is a necessary condition for ensuring normality based on the boundedness of a monomial's spherical derivative. 
\end{example}
\section{\bf Proof of the main results}
To facilitate the proofs of our main results, we first state a series of preliminary lemmas. In particular, we utilize the Zalcman-type rescaling lemma, a fundamental criterion in the theory of normal families that relates the failure of normality to infinitesimal convergence. The following technical results are required for the subsequent proofs.
\begin{lemma}\cite{Zalcman-BAMS-1998}\label{lem-2.1}
Let $ \mathcal{F} $ be a family of meromorphic functions defined in $ \mathbb{D} $ such that all zeros of functions in $ \mathcal{F} $ have multiplicity at-least $ p $ and all poles at-least $ q $. Let $ \alpha $ be a real number satisfying $ -p< \alpha <q $. Then, $ \mathcal{F} $ is not normal at $ z_0 $ if, and only if, there exist
\begin{enumerate}
	\item[(i)] A number  $ r $, $ 0<r<1 $;
	\item[(ii)] Points $ z_n $ with $ |z_n|< r \mbox{,}\;\; z_n\rightarrow z_0 $;
	\item[(iii)] Functions $ f_n \in \mathcal{F} $;
	\item[(iv)] Positive numbers $ \rho_n\rightarrow 0^{+} $
\end{enumerate}
such that $ g_n(\xi)= \rho^{-{\alpha}}_n f_n(z_n + \rho_n \xi) \rightarrow g(\xi) $ uniformly on compact subset of $ \mathbb{C} $ with respect to the spherical metric, where $ g $ is a non-constant meromorphic function all of whose zeros and poles have multiplicity at-least $ p , q $, respectively. Moreover, $ g^{\#}(\xi) \leq g^{\#}(0) \leq 1 $ and $ g $ has order at most $ 2 $.
\end{lemma}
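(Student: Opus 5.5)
The lemma is the Zalcman--Pang rescaling lemma, and I would prove the two directions separately. The substance is in the necessity direction. I would first reduce to $0\le \alpha<q$. Replacing each $f\in\mathcal{F}$ by $1/f$ swaps the roles of zeros and poles, hence swaps $p$ and $q$. It also turns $\rho_n^{-\alpha}f_n(z_n+\rho_n\xi)$ into $\rho_n^{\alpha}\bigl(1/f_n\bigr)(z_n+\rho_n\xi)$, i.e.\ it replaces $\alpha$ by $-\alpha$. Moreover $(1/g)^{\#}=g^{\#}$, so all conclusions are preserved. We may also normalise $z_0=0$.

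\textbf{Sufficiency.} Suppose $\mathcal{F}$ is normal at $z_0$ but a rescaled sequence converges to a nonconstant $g$. Passing to a subsequence, $f_n\to F$ locally uniformly near $z_0$.
\begin{itemize}
\item If $\alpha=0$, then $g_n(\xi)=f_n(z_n+\rho_n\xi)\to F(z_0)$, so $g$ is constant, a contradiction.
\item If $0<\alpha<q$ and $F(z_0)\neq\infty$, then $|g_n|\le C\rho_n^{-\alpha}$ on compacts. Write $F(z)-F(z_0)\sim c(z-z_0)^m$ near $z_0$. Using Hurwitz and the fact that zeros have multiplicity at least $p$, one checks that the limit is either constant or $\infty$, again a contradiction.
\item If $F(z_0)=\infty$, the pole multiplicity $\ge q>\alpha$ gives the same conclusion through the Taylor expansion of $1/f_n$.
\end{itemize}
This direction is routine but bookkeeping-heavy.

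\textbf{Necessity.} Assume $\mathcal{F}$ is not normal at $0$. By Marty's criterion, there are $f_n\in\mathcal{F}$ and points $w_n\to0$ with $f_n^{\#}(w_n)\to\infty$. Fix $r<1$ such that every disk around $0$ of radius less than $r$ witnesses non-normality. Following Pang--Zalcman, for $t>0$ I define
$$\Phi_n(t)=\sup_{|z|\le r}\ \Bigl(1-\tfrac{|z|}{r}\Bigr)^{1+\alpha}\,\frac{t^{1-\alpha}|f_n'(z)|}{t^{-2\alpha}+|f_n(z)|^2}.$$
This is the spherical derivative at $0$ of $\zeta\mapsto t^{-\alpha}f_n(z+t\zeta)$, weighted by a boundary factor. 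I then need three properties:
\begin{enumerate}
\item $\Phi_n$ is continuous in $t$.
\item $\Phi_n(t)\to0$ as $t\to0^+$. Near a point where $f_n$ is finite this is clear since $\alpha<1$ is not needed there. Near poles of order $\ge q>\alpha$, writing $f_n=h/(z-b)^{s}$ gives $t^{1-\alpha}|f'|/(t^{-2\alpha}+|f|^2)\lesssim t^{1+\alpha}|z-b|^{s-1}\cdots$, which I would bound by $t^{\text{positive}}$ after splitting into $|z-b|<t$ and $|z-b|\ge t$. This is where $\alpha<q$ is used. Zeros of multiplicity $\ge p$ are used symmetrically when $\alpha<0$, which the reduction avoids.
\item $\Phi_n(1)\ge$ a weighted $f_n^{\#}(w_n)\to\infty$.
\end{enumerate}
By the intermediate value theorem there is $\rho_n^*<1$ with $\Phi_n(\rho_n^*)=1$, and $\rho_n^*\to0$. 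Choose $z_n$, with $|z_n|<r$, at which the supremum is essentially attained. Set $\rho_n=\rho_n^*\bigl(1-|z_n|/r\bigr)^{-1}$ (suitably adjusted) and $g_n(\xi)=\rho_n^{-\alpha}f_n(z_n+\rho_n\xi)$.

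The maximality of $z_n$ together with the weight $(1-|z|/r)^{1+\alpha}$ gives $g_n^{\#}(\xi)\le 1+o(1)$ on compacts. It also gives $g_n^{\#}(0)=1$, after rescaling the normalisation by a constant if one wants $g^{\#}(0)\le1$ exactly. By Marty's criterion a subsequence converges spherically, locally uniformly, to $g$ with $g^{\#}\le g^{\#}(0)=1$, so $g$ is nonconstant. Hurwitz's theorem preserves the multiplicity of zeros ($\ge p$) and poles ($\ge q$). Finally, bounded $g^{\#}$ forces order at most $2$ by the Clunie--Hayman estimate $T(r,g)=O(r^2)$, obtained via the Ahlfors--Shimizu form of $T$.

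\textbf{Main obstacle.} The hardest step is (2): proving $\Phi_n(t)\to0$ uniformly over $|z|\le r$ for each fixed $n$ when $\alpha\neq0$. This is where the multiplicity hypothesis $-p<\alpha<q$ enters. I would handle it by the local expansion at the finitely many poles (respectively zeros) of $f_n$ in $|z|\le r$, treating the remaining compact set trivially.
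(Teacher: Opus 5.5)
\textbf{Verdict: there is a genuine gap, and it cannot be closed, because the lemma as printed is false.}

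The paper does not prove this lemma; it only quotes it from Zalcman's survey. So your outline has to stand on its own, and it breaks at its central computation.

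\textbf{The sign error in $\Phi_n$.} The quantity inside your $\Phi_n$ is not the spherical derivative at $0$ of $\zeta\mapsto t^{-\alpha}f_n(z+t\zeta)$. That derivative is
\[
\frac{t^{1-\alpha}|f_n'(z)|}{1+t^{-2\alpha}|f_n(z)|^2}=\frac{t^{1+\alpha}|f_n'(z)|}{t^{2\alpha}+|f_n(z)|^2}.
\]
Your expression instead equals $t^{1+\alpha}|f_n'(z)|/(1+t^{2\alpha}|f_n(z)|^2)$, which is the spherical derivative of $\zeta\mapsto t^{\alpha}f_n(z+t\zeta)$.

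Your pole computation fits the quantity you wrote. But then $\Phi_n(\rho_n^*)=1$ normalizes $\rho_n^{\alpha}f_n(z_n+\rho_n\xi)$, not the $g_n$ you define.

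For the correct quantity with $\alpha\ge 0$, the picture reverses:
\begin{itemize}
\item Poles are harmless: near a pole, $f=h/(z-b)^s$, the quantity is $O(t^{1+\alpha})$.
\item The obstruction sits at zeros. For $f=(z-b)^s h$ with $h(b)\neq 0$, the quantity is comparable to $t^{1+\alpha}|z-b|^{s-1}/(t^{2\alpha}+|z-b|^{2s})$. At $|z-b|=t^{\alpha/s}$ this is of size $t^{1-\alpha/s}$.
\end{itemize}
So step (2) needs $\alpha<p$, the \emph{zero} multiplicity, not $\alpha<q$. It genuinely fails when $\alpha\ge p$: for $f(z)=z$ the quantity at $z=0$ equals $t^{1-\alpha}$.

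\textbf{The statement as printed is false.} No argument can close this gap. The printed hypotheses are zeros of multiplicity $\ge p$, poles $\ge q$, and $-p<\alpha<q$. The correct range is $-q<\alpha<p$. This matches the Pang--Zalcman lemma, where positive exponents are bounded by the multiplicity of zeros. Take entire functions with $p=1$, $q=3$ and $\alpha=2$.
\begin{itemize}
\item The ``only if'' direction fails. The family $\{nz\}$ is not normal at $0$. Yet every $\rho_n^{-2}m_n(z_n+\rho_n\xi)$ is affine with slope $m_n/\rho_n\to\infty$, so no nonconstant limit exists.
\item The ``if'' direction fails. The family $\{z/n\}$ is normal. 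However, $z_n=0$ and $\rho_n=1/n$ give $g_n(\xi)=\xi$, and this limit satisfies every listed conclusion. This is the case $f_n\to F\equiv 0$, which your sufficiency sketch never treats.
\end{itemize}

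\textbf{How to repair the argument for the correct range.} Your $1/f$ reduction is fine. Impose $-q<\alpha<p$, reduce to $0\le\alpha<p$, and redo step (2) at the zeros. Your scheme then becomes the standard proof after routine repairs:
\begin{itemize}
\item Let $\phi_n(z,t)$ be the corrected quantity. Put the boundary factor inside the scale, i.e.\ maximize $\phi_n(z,t(1-|z|/r))$, and set $\rho_n=\rho_n^*(1-|z_n|/r)$. Your factor with exponent $-1$ is inverted.
\item For $\alpha>1$, the map $t\mapsto\phi_n(z,t)$ is not monotone. Use instead the bound $\phi_n(z,\lambda t)\le\max(\lambda^{1+\alpha},\lambda^{1-\alpha})\,\phi_n(z,t)$. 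It gives both $g_n^{\#}\le 1+o(1)$ and $\rho_n^*\to 0$.
\item Work on discs shrinking slowly to $z_0$. A maximizer over a fixed disc $|z|\le r$ need not converge to $z_0$, so conclusion (ii) is not yet justified.
\end{itemize}
The paper's own uses of the lemma are $\alpha=0$ and $\alpha=D_M/d_M\in(0,1)$. Both lie in the correct range, so nothing downstream is affected.
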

\begin{lemma}\cite{Clunie-Hayman-CMH-1966}\label{lem-2.2}
Let $ g $ be an entire function and $ M $ be a positive constant. If $ g^{\#}(\xi)\leq M $ for all $ \xi\in\mathbb{C} $, then $ g $ has order at most one.
\end{lemma}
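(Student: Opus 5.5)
The plan is to go through the normality of the translates of $g$, then a pointwise bound on the logarithmic derivative of $g$, and then integrate along rays to bound $\log M(r,g)$ linearly in $r$. This lemma is the classical result of Clunie and Hayman~\cite{Clunie-Hayman-CMH-1966}; what follows is how I would organize a proof, not a verified argument.

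\emph{Step 1: normality of translates.} Put $g_c(\xi)=g(\xi+c)$ for $c\in\mathbb{C}$. Each $g_c$ is entire and $g_c^{\#}\le M$ on $\mathbb{C}$. By Marty's theorem, $\{g_c: c\in\mathbb{C}\}$ is normal on $\mathbb{C}$. By Hurwitz's theorem every locally uniform limit is therefore either an entire function or $\equiv\infty$. The bound $g^{\#}\le M$ also makes $g$ uniformly Lipschitz in the chordal metric, with constant $M$. This Lipschitz property will be used to move between points where $|g|$ is moderate and points where it is huge.

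\emph{Step 2: the key estimate.} I would aim for constants $R_0, C>0$ such that
\begin{equation*}
|g'(z)|\le C\,|g(z)|\log|g(z)|\quad\text{whenever } |g(z)|\ge R_0 .
\end{equation*}
Suppose this fails. Then there are points $c_n$ with $|g(c_n)|\to\infty$ and
\begin{equation*}
\frac{|g'(c_n)|}{|g(c_n)|\log|g(c_n)|}\to\infty .
\end{equation*}
Pass to a convergent subsequence of $g_{c_n}$. Since $|g(c_n)|\to\infty$, the limit must be $\equiv\infty$. Then on $|\xi|\le 1$ the harmonic functions $u_n=\log|g_{c_n}|$ are eventually positive. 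Harnack's inequality gives $|\nabla u_n(0)|\le 2u_n(0)$, which is exactly $|g'(c_n)|\le 2|g(c_n)|\log|g(c_n)|$, a contradiction. This makes the estimate plausible. However, it needs $u_n>0$ on a fixed disk, and I must ensure that the convergence to $\infty$ is locally uniform on a disk of fixed radius.

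\emph{Step 3: integration along rays, and the main obstacle.} The estimate of Step 2 gives $\frac{d}{dr}\log\log|g(re^{i\theta})|\le C$ on the set where $|g|\ge R_0$. Integrating this only yields $\log\log M(r,g)=O(r)$. To reach order $\le 1$ I need the sharper bound $|g'|\le C|g|$ on $\{|g|\ge R_0\}$, that is, a bounded logarithmic derivative there; integrating along rays then gives $\log M(r,g)\le Cr+O(1)$. The $\infty$-limit case in Step 2 is exactly where this sharpening is hard, since Harnack alone controls $\nabla u$ only by a multiple of $u$. This is the step I expect to be the main obstacle.

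My first attempt is to rescale differently. Set $\rho_n=1/\log|g(c_n)|$ and consider $h_n(\xi)=u_n(\rho_n\xi)/\log|g(c_n)|$. The aim is to use the Lipschitz bound on the sphere to show that the zeros of $g$ stay at distance $\gtrsim\rho_n$ from $c_n$, so that Harnack applies at the scale $\rho_n$ and yields $|\nabla u_n(0)|=O(1)$. If this fails, the fallback is the Ahlfors--Shimizu route. Since $\Delta\log(1+|g|^2)=4(g^{\#})^2\le 4M^2$, one gets $T(r,g)=O(r^2)$ and order $\le 2$ immediately. One would then improve this to order $\le 1$ by using that $g$ omits $\infty$, which makes $\overline N(r,\infty)=0$ and gives the spherical area a one-dimensional rather than two-dimensional growth.
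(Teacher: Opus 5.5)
The paper does not prove this lemma; it is quoted from Clunie--Hayman. Your proposal therefore has to stand on its own, and it does not: there is a genuine gap exactly at the step you flag.

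Steps 1 and 2 are sound. Marty's criterion applies on all of $\mathbb{C}$ uniformly in $c$, so $g_{c_n}\to\infty$ locally uniformly on $\mathbb{C}$, and $u_n>0$ on $|\xi|\le 1$ for large $n$; your worry about a disk of fixed radius is unfounded. But these steps only yield $|g'|\le C|g|\log|g|$ on $\{|g|\ge R_0\}$, hence $\log\log M(r,g)=O(r)$, which does not even give finite order. The bound $|g'|\le C|g|$ that Step 3 requires is never established.

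Your rescaling idea goes in the wrong direction. Harnack on a disk of radius $\rho$ gives $|\nabla u(0)|\le 2u(0)/\rho$. At scale $\rho_n=1/\log|g(c_n)|$ this gives $2(\log|g(c_n)|)^2$, which is worse than the unit-scale bound. To reach $O(1)$ you need $\log|g|>0$ on a disk around $c_n$ of radius comparable to $\log|g(c_n)|$. Having $g\neq 0$ there is not enough, since Harnack needs positivity.

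The Ahlfors--Shimizu fallback is not an argument either. The fact $\overline N(r,\infty)=0$ says nothing about spherical area by itself: $e^{z^2}$ omits $\infty$ and has spherical area $\asymp r^2$. The drop from order $2$ to order $1$ is precisely the content of the lemma, and you give no mechanism for it.

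The missing idea is to use the hypothesis on the level set $\{|g|=1\}$, where $g^{\#}\le M$ says exactly that $|\nabla \log|g||=|g'/g|\le 2M$. Take $g$ non-constant; then $E=\{|g|\le 1\}$ is nonempty, since otherwise $1/g$ would be a bounded entire function.

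Take $z_0\notin E$, put $d=\mathrm{dist}(z_0,E)$, and pick $p\in E$ with $|p-z_0|=d$. Then $|g(p)|=1$, and $u=\log|g|$ is positive and harmonic on $D(z_0,d)$ and harmonic near $p$. Harnack's inequality gives $u(z)\ge \frac{d-s}{d+s}\,u(z_0)$ for $z$ on the segment $[z_0,p]$ with $|z-z_0|=s$. Letting $s\to d$ yields $|\nabla u(p)|\ge u(z_0)/(2d)$. Combined with $|\nabla u(p)|\le 2M$, this gives $\log|g(z_0)|\le 4M\,\mathrm{dist}(z_0,E)\le 4M\bigl(|z_0|+\mathrm{dist}(0,E)\bigr)$.

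This gives $\log M(r,g)=O(r)$ at once, so $g$ has order at most one. Feeding it back into Harnack's gradient estimate on $D(z_0,d)$ also gives $|g'/g|\le 8M$ on $\{|g|>1\}$. That is precisely the estimate your Step 3 was missing.
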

\begin{remark}\cite{Tran-Nguyen-CMFT-2017}\label{rem-2.2}
In Lemma \ref{lem-2.1}, if $ g $ is a holomorphic function, then by Lemma \ref{lem-2.2}, the order of $ g $ is not greater than $ 1 $.
\end{remark}
The definition of differential polynomials of meromorphic functions is presented next, which will enable us to extend the scope of our study.
\begin{definition}\cite{Hinchliffe-CMFT-2002}\label{def-2.2}
Assume that, $ f $ is a transcendental meromorphic function in the complex plane $ \mathbb{C} $ and its first $ p $ derivatives. A differential polynomial $ P $ of $ f $ is defined by 
\begin{equation*}
	P(z):=\sum_{i=1}^{n}\alpha_i(z)\prod_{j=0}^{p}\left(f^{(j)}(z)\right)^{S_{ij}},
\end{equation*}
where $ S_{ij}\mbox{,}\; 1\leq i\leq n \mbox{,}\; 0\leq j \leq p $\; are non-negative integers and $ \alpha_i\not\equiv 0 \mbox{,}\; 1\leq i\leq n$ are small (with respect to $ f $) meromorphic functions. Let
\begin{equation*}
	d(P):=\min_{1\leq i\leq n}\sum_{j=0}^{p} S_{ij}\;\;\;\; \mbox{and}\;\;\;\; \theta(P):=\max_{1\leq i\leq n}\sum_{j=0}^{p}jS_{ij}.
\end{equation*}
\end{definition}
In view of Definition \ref{def-2.2}, Hinchliffe \cite{Hinchliffe-CMFT-2002} have generalized theorems of Hayman \cite{Hayman-CP-1964} and Chuang \cite{Chuang-WP-1987}, and obtained the following result which we will be useful in the proof of Theorem D.
\begin{lemma}\label{lem-2.3}\cite{Hinchliffe-CMFT-2002}
Let $ f $ be a transcendental meromorphic function, let $ P $ be a non-constant differential polynomial in $ f $ with $ d(P)\geq 2 $. Then, for any $ a\in \mathbb{C}\setminus\{0\} $,
\begin{align*}
	T(r,f)\leq \frac{\theta(P) + 1}{d(P)- 1} \overline{N}\left(r, \frac{1}{f}\right) + \frac{1}{d(P) - 1} \overline{N}\left(r, \frac{1}{P - a}\right) + S(r,f).
\end{align*}
for all $ r\in [1,\infty) $ excluding a set of finite Lebesgue measure.
\end{lemma}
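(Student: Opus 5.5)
The statement immediately above is Lemma \ref{lem-2.3}. I would prove it by a Hayman–Milloux type argument that compares $1/f^{d}$ with $P$ and $P-a$. Throughout, $d=d(P)\ge 2$ and $\theta=\theta(P)$. Since $P$ is non-constant, $P'\not\equiv 0$. The coefficients $\alpha_i$ are small functions, so $T(r,P)=O(T(r,f))+S(r,f)$. Hence every error term $S(r,P)$ or $S(r,P')$ is absorbed into $S(r,f)$.

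\textbf{Step 1: proximity estimate.} Start from the identity
\begin{align*}
\frac{1}{f^{d}}=\frac{1}{a}\,\frac{P}{f^{d}}-\frac{1}{a}\,\frac{P-a}{P'}\cdot\frac{P'}{f^{d}},
\end{align*}
and estimate $m(r,1/f)$ by integrating only over the arc where $|f|<1$.
\begin{itemize}
\item On that arc, each monomial of $P/f^{d}$ equals $\alpha_i\prod_j (f^{(j)}/f)^{S_{ij}}\,f^{\sum_j S_{ij}-d}$. The exponent $\sum_j S_{ij}-d$ is nonnegative and $|f|<1$, so the last factor has modulus at most $1$. The same holds for the monomials of $P'$, which also have degree $\ge d$.
\item By the lemma on the logarithmic derivative, both $P/f^{d}$ and $P'/f^{d}$ contribute only $S(r,f)$ there.
\end{itemize}
This gives
\begin{align*}
d\,m(r,1/f)\le m\!\left(r,\tfrac{P-a}{P'}\right)+S(r,f).
\end{align*}
Next apply the first main theorem, together with $m(r,P'/(P-a))=S(r,f)$:
\begin{align*}
m\!\left(r,\tfrac{P-a}{P'}\right)=N\!\left(r,\tfrac{P'}{P-a}\right)-N\!\left(r,\tfrac{P-a}{P'}\right)+S(r,f).
\end{align*}
Adding $d\,N(r,1/f)$ to both sides yields
\begin{align*}
d\,T(r,f)\le d\,N(r,1/f)+\overline N(r,P)+N\!\left(r,\tfrac{1}{P-a}\right)-N\!\left(r,\tfrac{1}{P'}\right)+S(r,f).
\end{align*}

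\textbf{Step 2: local counting.} I would bound the right-hand side point by point.
\begin{itemize}
\item At a zero of $f$ of multiplicity $m$ that is not a zero or pole of some $\alpha_i$, every term of $P$ vanishes to order at least $md-\theta$. Hence $P'$ vanishes to order at least $md-\theta-1$ whenever $md>\theta$. The net contribution of $dN(r,1/f)$ minus these zeros of $P'$ is therefore at most $\theta+1$ per distinct zero. If instead $md\le\theta$, the contribution is at most $dm\le\theta$. In both cases this part is bounded by $(\theta+1)\overline N(r,1/f)$.
\item An $a$-point of $P$ of multiplicity $t$ is a zero of $P'$ of multiplicity $t-1$. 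So $N(r,1/(P-a))$ minus these zeros of $P'$ is at most $\overline N(r,1/(P-a))$.
\item Poles of $P$ arise only from poles of $f$ or of the $\alpha_i$, so $\overline N(r,P)\le \overline N(r,f)+S(r,f)\le T(r,f)+S(r,f)$.
\item Zeros and poles of the $\alpha_i$ contribute only $S(r,f)$, and the remaining zeros of $P'$ enter with a minus sign, so they can be discarded.
\end{itemize}
Collecting these bounds gives
\begin{align*}
(d-1)T(r,f)\le(\theta+1)\overline N(r,1/f)+\overline N\!\left(r,\tfrac{1}{P-a}\right)+S(r,f).
\end{align*}
Dividing by $d-1>0$ gives the claim, outside the exceptional set of finite measure coming from the logarithmic derivative lemma.

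\textbf{Main obstacle.} The delicate step is Step 1. The terms of $P$ may have degree strictly larger than $d$, so $m(r,P/f^{d})$ cannot be controlled globally. The restriction to the arc $|f|<1$, which suffices because $m(r,1/f)$ only sees that arc up to $O(1)$, is exactly what makes the estimate work. I would also check carefully in the local count that zeros of $f$ coinciding with zeros or poles of the small functions $\alpha_i$ are absorbed into $S(r,f)$.
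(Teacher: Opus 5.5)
Your proof is correct. It is not the paper's route, because the paper gives no proof of Lemma~\ref{lem-2.3}: it quotes the lemma from Hinchliffe, and for non-transcendental $f$ it refers to Lemma~10 of Dethloff--Tan--Thin.

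Your argument is a self-contained Hayman-type proof. It has four ingredients:
\begin{enumerate}
\item the identity $f^{-d}=a^{-1}P/f^{d}-a^{-1}\frac{P-a}{P'}\cdot\frac{P'}{f^{d}}$, estimated only on the arc $|f|<1$, where every monomial of $P/f^{d}$ and $P'/f^{d}$ has a harmless factor $f^{\sum_j S_{ij}-d}$;
\item the first main theorem applied to $(P-a)/P'$;
\item a local count in which each distinct zero of $f$ costs at most $\theta+1$ and each distinct $a$-point of $P$ costs $1$;
\item the single lossy step $\overline N(r,P)\le\overline N(r,f)+S(r,f)\le T(r,f)+S(r,f)$.
\end{enumerate}

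Two points you leave implicit are fine but deserve a sentence each.

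\textbf{No double subtraction.} The zeros of $P'$ you subtract at zeros of $f$ (the case $md>\theta$) and at $a$-points of $P$ lie at different points. At such a zero of $f$, $P$ vanishes, so the point cannot be an $a$-point because $a\neq0$. Hence no zero of $P'$ is subtracted twice.

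\textbf{Zeros of $f$ at poles of the $\alpha_i$.} Suppose some $\alpha_i$ has a pole of order $p$ at a zero of $f$. Then the order of $P$ there is at least $md-\theta-p$. So the excess over $\theta+1$ at that point is at most $p$, and these excesses sum to at most $\sum_i N(r,\alpha_i)=S(r,f)$. Merely noting that such points are few, i.e.\ counting them without multiplicity, would not suffice, since $dm$ can be arbitrarily large there.

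\textbf{What your route buys.} It shows exactly where $a\neq0$ and $d(P)\ge2$ enter. It also covers directly the case the paper actually needs. In the proof of Theorem~\ref{th-2.2} the lemma is applied to the rescaled limit $g$, which need not be transcendental, and the coefficients of $M[g]$ are constants. For rational $f$ with constant coefficients, each of $m(r,f^{(j)}/f)$ and $m(r,P'/(P-a))$ is $O(1)$. So your proof gives the inequality with $S(r,f)=O(1)$, which is exactly the extension for which the paper cites Dethloff--Tan--Thin. The citation, by contrast, only saves space.
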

Furthermore, as demonstrated by Dethloff et al. \cite[Lemma 10]{Dethloff-Tan-Thin-JMAA}, this result remains valid for the broader class of non-constant meromorphic functions.\vspace{1.2mm}
 For the sake of completeness and to facilitate the reader's understanding, we provide the rigorous proofs of Theorems \ref{th-2.1} and \ref{th-2.2} in detail within the following section.
\begin{proof}[\bf Proof of Theorem \ref{th-2.1}]
Without loss of generality, we may assume that $D$ is the unit disc
$\mathbb{D}$. Suppose that $\mathcal{F}$ is not normal at $z_0\in \mathbb{D}$. By Lemma \ref{lem-2.1}, for $\alpha=0$ there exist
\begin{enumerate}
	\item[(i)] a number  $ r $, $ 0<r<1 $;
	\item[(ii)] points $ z_m $ with $ |z_m|< r \mbox{,}\;\; z_m\rightarrow z_0 $;
	\item[(iii)] functions $ f_m \in \mathcal{F} $;
	\item[(iv)] positive numbers $ \rho_m\rightarrow 0^{+} $
\end{enumerate}
such that
\begin{equation}\label{eq-1.1}
	g_m(\xi):=f_m(z_m + \rho_m\xi)\rightarrow g(\xi)
\end{equation}
locally spherically uniformly on compact subsets of $\mathbb{C}$, to a non-constant meromorphic function $g$. Therefore, for $ k\in \mathbb{N} $, it is easy to see that
\begin{equation}\label{eq-1.2}
	g^{(k)}_m \rightarrow g^{(k)} \;\;\;\;\mbox{as}\;\;\;\; m\rightarrow \infty
\end{equation}
uniformly on compact subset of $ \mathbb{C}\setminus P $, where $ P $ is the set of poles of $g$. \vspace{1.5mm}
	
Consider the compact subset 
\begin{align*}
	K := \bigg\{z : |z| \leq \frac{1+z_0}{2}\bigg\} \subset \mathbb{D}.
\end{align*} By the hypothesis, there exist a subset $E \subset \mathbb{C}$ consisting of three points and a constant $M > 0$ such that, for a fixed integer $k > 2$, the following inequality holds:
\begin{align*}
	\left(f^{(j)}\right)^{\#}(z) \leq M, \quad j = 0, 1, \dots, k-1,
\end{align*}
for all $z \in K \cap f^{-1}(E)$ and all $f \in \mathcal{F}$. \vspace{2mm}
	
Let $a \in E$ be an arbitrary point. For any zero $\xi_0$ of $g - a$, Hurwitz’s theorem implies the existence of a sequence $\xi_m \to \xi_0$ such that, for sufficiently large $m$,
\begin{align*}
	f_m(z_m + \rho_m\xi_m) = a.
\end{align*}
Consequently, the points $z_m + \rho_m\xi_m$ belong to $K \cap f_m^{-1}(E)$ for all $m$ sufficiently large. By the hypothesis that $k > 2$, it follows that
\begin{align*}
	\frac{\left|f_m^{(i+1)}(z_m + \rho_m\xi_m)\right|} {1+\left|f_m^{(i)}(z_m + \rho_m\xi_m)\right|^2} \leq M, \quad i = 0, 1, \ldots, k-1.
\end{align*}
In particular, for $i=0$, we obtain
\begin{align*}
	\frac{|g^{\prime}_m(\xi_m)|}{1+|g_m(\xi_m)|^2} = \rho_m \frac{|f^{\prime}_m(z_m + \rho_m\xi_m)|}{1+|f_m(z_m + \rho_m\xi_m)|^2} \leq \rho_m M,
\end{align*}
which holds for all sufficiently large $m$. From \eqref{eq-1.1} we have 
\begin{align*}
	g^{\#}(\xi_0)=\lim_{m\rightarrow\infty}\left(g_m\right)^{\#}(\xi_m) =0
\end{align*} \textit{i.e.}, $g^{\prime}(\xi_0)=0$. Since $g(\xi_0) = a$ and $g'(\xi_0) = \dots = g^{(k)}(\xi_0) = 0$, a standard application of the Taylor expansion shows that $\xi_0$ is an $a$-point of $g$ with multiplicity at least $k+1 $. Indeed, if we assume otherwise, then $g'(\xi_0) \neq 0$. However, the previous inequality implies $g'(\xi_0) = 0$, which is a contradiction. Furthermore, if $g$ were a non-constant polynomial of degree at most $1$, say $g(z) = bz + c$ for some $b, c \in \mathbb{C}$ with $b \neq 0$, then $g'(\xi_0) = b$ would be non-zero, again contradicting the fact that $g'(\xi_0) = 0$. Consequently, $g$ must be a constant or possess higher-order zeros. \vspace{1.5mm}
	
On the other hand, we observe that
\begin{align*}
	|f^{\prime}_m(z_m + \rho_m\xi_m)|&= (1+|f_m(z_m + \rho_m\xi_m)|^2) f^{\#}_m(z_m + \rho_m\xi_m)\\&\leq M\left(1+\max_{b\in E} |b|^2 \right):=B_1.
\end{align*}
It follows that for all sufficiently large $m$,
\begin{align*}
	\frac{|g^{\prime\prime}_m(\xi_m)|}{1+|g^{\prime}_m(\xi_m)|^2} & =\rho^2_m \frac{|f^{\prime\prime}_m(z_m + \rho_m\xi_m)|}{1+ \rho^2_m |f^{\prime}_m(z_m + \rho_m\xi_m)|^2} \\&= \rho^2_m \frac{|f^{\prime\prime}_m(z_m + \rho_m\xi_m)|}{1+ |f^{\prime}_m(z_m + \rho_m\xi_m)|^2}\cdot \frac{1+ |f^{\prime}_m(z_m + \rho_m\xi_m)|^2}{1+ \rho^2_m |f^{\prime}_m(z_m + \rho_m\xi_m)|^2}\\&\leq M \rho^2_m \frac{1+ |f^{\prime}_m(z_m + \rho_m\xi_m)|^2}{1+ \rho^2_m |f^{\prime}_m(z_m + \rho_m\xi_m)|^2} \\&\leq M \rho^2_m \left(1+ |f^{\prime}_m(z_m + \rho_m\xi_m)|^2 \right)\\&\leq M \rho^2_m\left(1+ B^2_1\right).
\end{align*}
Consequently, we obtain $(g')^{\#}(\xi_0) = \lim_{m \to \infty} (g'_m)^{\#}(\xi_m) = 0$, which implies $g''(\xi_0) = 0$. Given that $g(\xi_0) = a$ and $g'(\xi_0) = 0$ (as established previously), it follows that $\xi_0$ is an $a$-point of $g$ with multiplicity at least $3$. Indeed, if we assume otherwise, then $g'''(\xi_0) \neq 0$. In this case, $g$ would be a non-constant polynomial of degree at most $2$.\vspace{2mm}
	
\noindent{\bf Case A.} If $g(z)=bz+c$, for some $b,c\in\mathbb{C}$, then $g^{\prime}(\xi_0)= b\neq 0$, this is a contradiction.\\
	
\noindent{\bf Case B.} If $g(z)=b_2 z^2 + b_1 z+c_1$, for some $b_2, b_1,c_1\in\mathbb{C}$, then $g^{\prime\prime}(\xi_0)=2b_2\neq 0$, this contradicts the fact that $g^{\prime\prime} (\xi_0)=0$. \vspace{1.5mm}
	
Moreover, we see that
\begin{align*}
	|f^{\prime\prime}_m(z_m + \rho_m\xi_m)|&= (1+|f^{\prime}_m(z_m + \rho_m\xi_m)|^2) \left(f^{\prime}_m \right)^{\#}(z_m + \rho_m\xi_m)\\&\leq M\left(1+B^2_1 \right):=B_2.
\end{align*}
For all $m$ sufficiently large, we have
\begin{align*}
	\frac{|g^{\prime\prime\prime}_m(\xi_m)|}{1+|g^{\prime\prime}_m(\xi_m)|^2} & =\rho^3_m \frac{|f^{\prime\prime\prime}_m(z_m + \rho_m\xi_m)|}{1+ \rho^4_m |f^{\prime\prime}_m(z_m + \rho_m\xi_m)|^2} \\&= \rho^3_m \frac{|f^{\prime\prime\prime}_m(z_m + \rho_m\xi_m)|}{1+ |f^{\prime\prime}_m(z_m + \rho_m\xi_m)|^2}\cdot \frac{1+ |f^{\prime\prime}_m(z_m + \rho_m\xi_m)|^2}{1+ \rho^4_m |f^{\prime\prime}_m(z_m + \rho_m\xi_m)|^2}\\&\leq M \rho^3_m \frac{1+ |f^{\prime\prime}_m(z_m + \rho_m\xi_m)|^2}{1+ \rho^4_m |f^{\prime\prime}_m(z_m + \rho_m\xi_m)|^2} \\&\leq M \rho^3_m \left(1+ |f^{\prime\prime}_m(z_m + \rho_m\xi_m)|^2 \right)\\&\leq M \rho^3_m\left(1+ B^2_2\right).
\end{align*}
Consequently, we obtain $(g'')^{\#}(\xi_0) = \lim_{m \to \infty} (g''_m)^{\#}(\xi_m) = 0$, which implies $g'''(\xi_0) = 0$. Since $g(\xi_0) = a$ and $g'(\xi_0) = g''(\xi_0) = 0$, it follows that $\xi_0$ is an $a$-point of $g$ with multiplicity at least $4$. Indeed, if we assume otherwise, then $g^{(iv)}(\xi_0) \neq 0$. In this case, $g$ would be a non-constant polynomial of degree at most $3$.\vspace{1.2mm}
	
If $g(z)=d_3 z^3 + d_2 z^2 +d_1 z+d_0$, for some $d_0, d_1, d_2, d_3, \in\mathbb{C}$, then $g^{\prime\prime\prime}(\xi_0)=6d_3\neq 0$, this contradicts the fact that $g^{\prime\prime\prime} (\xi_0)=0$. \vspace{1.2mm}
	
By repeating this process, it follows that
\begin{align*}
	|f^{(k-1)}_m(z_m + \rho_m\xi_m)|= (1+|f^{(k-2)}_m(z_m + \rho_m\xi_m)|^2) \left(f^{(k-2)}_m \right)^{\#}(z_m + \rho_m\xi_m)\leq B_{(k-1)},
\end{align*}
where 
\begin{align*}
	\begin{cases}
		B_{(k-1)}:=M\left(1+ B^2_{(k-2)}\right)\vspace{1.5mm}\\ B_{(k-2)}:=M\left(1+ B^2_{(k-3)}\right)\vspace{1.5mm}\\ \cdots \cdots \cdots\cdots\cdots\cdots \cdots\cdots\vspace{1.5mm} \\B_3:=M\left(1+ B^2_2\right)\vspace{1.5mm}\\ B_2:=M\left(1+ B^2_1\right) \vspace{1.5mm}\\ B_1:= M\left(1+\max_{b\in E} |b|^2 \right).
	\end{cases}
\end{align*}
For sufficiently large $m$, we have
\begin{align*}
	\frac{|g^{(k)}_m(\xi_m)|}{1+|g^{(k-1)}_m(\xi_m)|^2} & =\rho^k_m \frac{|f^{(k)}_m(z_m + \rho_m\xi_m)|}{1+ \rho^{2(k-1)}_m |f^{(k-1)}_m(z_m + \rho_m\xi_m)|^2} \\&= \rho^k_m \frac{|f^{(k)}_m(z_m + \rho_m\xi_m)|}{1+ |f^{(k-1)}_m(z_m + \rho_m\xi_m)|^2}\cdot \frac{1+ |f^{(k-1)}_m(z_m + \rho_m\xi_m)|^2}{1+ \rho^{2(k-1)}_m |f^{(k-1)}_m(z_m + \rho_m\xi_m)|^2}\\&\leq M \rho^k_m \frac{1+ |f^{(k-1)}_m(z_m + \rho_m\xi_m)|^2}{1+ \rho^{2(k-1)}_m |f^{(k-1)}_m(z_m + \rho_m\xi_m)|^2} \\&\leq M \rho^k_m \left(1+ |f^{(k-1)}_m(z_m + \rho_m\xi_m)|^2 \right)\\&\leq M \rho^3_m\left(1+ B^2_{(k-1)} \right).
\end{align*}
Thus, it is easy to see that 
\begin{align*}
	\left(g^{(k-1)}\right)^{\#}(\xi_0) =\lim_{m\rightarrow \infty}\left(g^{(k-1)}_m\right)^{\#}(\xi_m) =0
\end{align*} \textit{i.e.}, $g^{(k)} (\xi_0)=0$. Then, it is easy to see that $\xi_0$ is an $a$-point of $g$ with multiplicity at least $k+1$. Indeed, otherwise $g^{(k+1)}(\xi)=0$. Then $g$ is a non-constant polynomial with degree at most $k$.\vspace{1.2mm}
	
If $g(z)=r_k z^k +\cdots+ r_2 z^2 +r_1 z+r_0$, for some $r_0, r_1, r_2,\ldots, r_3, \in\mathbb{C}$, then $g^{(k)}(\xi_0)=k!\neq 0$, this contradicts the fact that $g^{(k)} (\xi_0)=0$. \vspace{1.2mm}
	
Consequently, every zero of $g - a$ must have multiplicity at least $k+1$ for each $a \in E$. Let $E = \{a_1, a_2, a_3\}$ denote the three distinct points in the set. By an application of the First and Second Main Theorems of Nevanlinna theory, we obtain
\begin{align*}
	T\left(r,g\right)&\leq \sum_{j=1}^{3} \overline{N}\left(r, \frac{1}{g-a_j}\right) + o\left(T\left(r,g\right)\right) \\&\leq \frac{1}{k+1} \sum_{j=1}^{3} N\left(r, \frac{1}{g-a_j}\right) + o\left(T\left(r,g\right)\right) \\&\leq \frac{3}{k+1} T(r,g) + o\left(T\left(r,g\right)\right)
\end{align*}
for all $r\in[1,\infty)$ excluding a set of finite Lebesgue measure. Since $k > 2$, this yields a contradiction. Thus, $\mathcal{F}$ is normal in $\mathbb{D}$, which completes the proof.
\end{proof}
\begin{proof}[\bf Proof of Theorem \ref{th-2.2}]
First we consider $D$ as an unit disk $ \mathbb{D} $. On contrary, we suppose that the family $ \mathcal{F} $ be not normal at $ z_0\in\mathbb{D} $. Then by the Lemma \ref{lem-2.1}, there exist 
\begin{enumerate}
	\item[(i)] a number  $ r $, $ 0<r<1 $;
	\item[(ii)] points $ z_m $ with $ |z_m|< r \mbox{,}\;\; z_m\rightarrow z_0 $;
	\item[(iii)] functions $ f_m \in \mathcal{F} $;
	\item[(iv)] positive numbers $ \rho_m\rightarrow 0^{+} $
\end{enumerate}
such that
\begin{equation}\label{eq-2.1}
	g_m(\xi):=\frac{f_m(z_m + \rho_m\xi)}{\rho^{\alpha}_m}\rightarrow g(\xi)
\end{equation}
where 
\begin{align*}
	\alpha=\frac{D_M}{d_M}=\frac{n_1+2n_2+\cdots+kn_k}{n_0+n_1+\cdots+n_k}
\end{align*} locally uniformly with respect to spherical metric, to a non-constant meromorphic function $ g $ on compact subset of $ \mathbb{C} $, with zeros of multiplicity at least $ k $. Therefore, for $ k\in \mathbb{N} $, it is easy to see that
\begin{equation}\label{eq-2.2}
	g^{(k)}_m \rightarrow g^{(k)} \;\;\;\;\mbox{as}\;\;\;\; m\rightarrow \infty
\end{equation}
uniformly on compact subset of $ \mathbb{C}\setminus P $, where $ P $ is the set of poles of $ g $. A simple computation shows that
\begin{align}\label{eq-2.3}
	\nonumber M[g_m(\xi)]&=[g_m(\xi)]^{n_0} [g^{\prime}_m(\xi)]^{n_1}\cdots[g^{(k)}_m(\xi)]^{n_k}\\&\nonumber=\left(\frac{f_m(z_m+\rho_m\xi)}{\rho^{\alpha}_m}\right)^{n_0} \left(\frac{\rho_m f^{\prime}_m(z_m+\rho_m\xi)}{\rho^{\alpha}_m}\right)^{n_1}\cdots\left(\frac{\rho^{k}_m f^{(k)}_m(z_m+\rho_m\xi)}{\rho^{\alpha}_m}\right)^{n_k}\\&\nonumber=\frac{\left(f_m(z_m+\rho_m\xi)\right)^{n_0}}{\rho^{\alpha n_0}_m}.\frac{\left(f^{\prime}_m(z_m+\rho_m\xi)\right)^{n_1}}{\rho^{(\alpha - 1)n_1}_m} \cdots\frac{\left(f^{(k)}_m(z_m+\rho_m\xi)\right)^{n_k}}{\rho^{(\alpha - k)n_k}_m}\\&\nonumber= \frac{M[f_m(z_m+\rho_m\xi)]}{\rho^{\alpha {n_0} + (\alpha -1){n_1} + \cdots + (\alpha -k){n_k}}_m}\\&\nonumber= \frac{M[f_m(z_m+\rho_m\xi)]}{\rho^{\alpha(n_0 + n_1 + \cdots + n_k)-(n_1 + 2n_2 + \cdots + kn_k)}_m}\\&\nonumber= \frac{M[f_m(z_m+\rho_m\xi)]}{\rho^{\alpha d_M-D_M}_m}\\&= M[f_m(z_m+\rho_m\xi)].
\end{align}
By virtue of \eqref{eq-2.1}, \eqref{eq-2.2}, and \eqref{eq-2.3}, we conclude that 
\begin{align}\label{eq-2.4}
	M[g_m(\xi)]\;\;\rightarrow \;\;M[g(\xi)]
\end{align}
uniformly on compact subsets of $\mathbb{C}\setminus P$.\vspace{2mm}
	
We first demonstrate that the differential monomial $M[g]$ is non-constant and that all zeros of $M[g] - a$ possess a multiplicity of at least $2$. To show that $M[g]$ is non-constant, we note that since $g$ is non-constant and its zeros have multiplicity at least $k$, the $k$-th derivative $g^{(k)}$ cannot vanish identically. Thus, $M[g] \not\equiv 0$. Suppose, toward a contradiction, that $M[g]$ is constant. Given $n \geq 1$, it follows that $g$ must be a nowhere vanishing entire function. According to Remark \ref{rem-2.2}, $g$ takes the form $g(\xi) = e^{c\xi + d}$ for some $c \neq 0$. A direct calculation then yields
\begin{align*}
	M[g(\xi)]&=\left(g(\xi)\right)^{n_0} \left(g^{\prime}(\xi)\right)^{n_1}\cdots\left(g^{(k)}(\xi)\right)^{n_k}\\&=\left(e^{c\xi+d}\right)^{n_0} \left(ce^{c\xi+d}\right)^{n_1} \cdots \left(c^k e^{c\xi+d}\right)^{n_k} \\&= c^{n_1 +2 n_2 +\cdots + kn_k} \;e^{(c\xi +d)(n_0+n_1+\cdots+n_k)}\\&=c^{D_M}e^{(c\xi+d)d_M}.
\end{align*} 
Since $M[g(\xi)]$ is assumed to be constant, it follows that $c^{D_M}e^{d_M(c\xi+d)}$ must also be constant. However, as $c \neq 0$ and $d_M \geq 1$, this yields a contradiction. Consequently, we conclude that $M[g]$ is non-constant.\vspace{2mm}
	
Let 
\begin{align*}
	H := \bigg\{z : |z| \leq \frac{1+|z_0|}{2}\bigg\}
\end{align*} denote a compact subset of $\mathbb{D}$. By the hypothesis, there exist a value $a \in \mathbb{C} \setminus \{0\}$ and a positive constant $M^*$ such that $(M[f(z)])^{\#} \leq M^*$ for all $f \in \mathcal{F}$ and all $z \in H \cap (M[f])^{-1}(a)$. We shall demonstrate that all zeros of $M[g] - a$ possess a multiplicity of at least $2$.\vspace{2mm}
	
For any zero $\xi_0$ of $M[g] - a$, Hurwitz’s theorem implies the existence of a sequence $\{\xi_m\}_{m=1}^{\infty}$ converging to $\xi_0$ such that $M[f_m(z_m + \rho_m\xi_m)] = a$. For sufficiently large $m$, it follows that $z_m + \rho_m\xi_m \in H \cap (M[f_m])^{-1}(a)$. Thus, by the hypothesis, we obtain
\begin{align}\label{eq-2.5}
	\left(M[f_m(z_m+\rho_m\xi_m)]\right)^{\#}=\frac{|M^{\prime}[f_m(z_m+\rho_m\xi_m)]|}{1+|M[f_m(z_m+\rho_m\xi_m)]|^2}\leq M^{*}.
\end{align}
For all $ m $ sufficiently large, in view of \eqref{eq-2.5}, an easy computation shows that
\begin{align}\label{eq-2.6}
	\left(M[g_m(\xi_m)]\right)^{\#}=\frac{|M[g_m(\xi_m)]|^{\prime}}{1+|M[g_m(\xi_m)]|^2}=\frac{\rho_m|M[f_m(z_m+\rho_m\xi_m)]|^{\prime}}{1+|M[f_m(z_m+\rho_m\xi_m)]|^2}\leq \rho_m M^{*}.
\end{align}
Using \eqref{eq-2.4} and \eqref{eq-2.6}, we see that
\begin{align*}
	\left(M[g(\xi_0)]\right)^{\#}=\lim_{m\rightarrow\infty}\left(M[g_m(\xi_m)]\right)^{\#}=0.
\end{align*}
Since $(M[g(\xi_0)])^{\#} = 0$, it follows that $(M[g])'(\xi_0) = 0$. Consequently, $\xi_0$ is an $a$-point of $M[g]$ with multiplicity at least $2$. Applying Lemma \ref{lem-2.3} to $P = M[g]$, with $\theta(P) = D_M$ and $d(P) = d_M$, a direct calculation yields
\begin{align}\label{eq-2.7}
	T(r,g)&\leq\frac{D_M +1}{d_M -1}\overline{N}\left(r,\frac{1}{g}\right) + \frac{1}{d_M -1} \overline{N}\left(r,\frac{1}{M[g]-a}\right) + S(r,g)\\&\nonumber\leq\frac{D_M +1}{k(d_M -1)} N\left(r,\frac{1}{g}\right) + \frac{1}{2(d_M -1)} N\left(r, \frac{1}{M[g] -a}\right) + S(r,g)\\&\nonumber\leq\frac{D_M +1}{k(d_M -1)} T(r,g) + \frac{1}{2(d_M -1)} T(r,M[g]) + S(r,g)\\&\nonumber\leq\frac{D_M +1}{k(d_M -1)} T(r,g) + \frac{1}{2(d_M -1)} T\left(r,(g)^{n_0} (g^{\prime})^{n_1}\cdots\left(g^{(k)}\right)^{n_k}\right)\\&\quad\nonumber + S(r,g)\\&\nonumber\leq\frac{1}{2(d_M -1)}\bigg(T(r,g^{n_0}) + T(r,(g^{\prime})^{n_1}) +\cdots+ T\left(r,\left(g^{(k)}\right)^{n_k}\right)\bigg)\\&\quad\nonumber+\frac{D_M +1}{k(d_M -1)} T(r,g)+ S(r,g)\\&\nonumber\leq\frac{D_M +1}{k(d_M -1)} T(r,g) + \frac{1}{2(d_M -1)}\left(n_0 + 2n_1 + \cdots + (k+1)n_k\right)T(r,g)\\&\quad\nonumber+ S(r,g)\\&\nonumber\leq \frac{1}{2(d_M -1)}\left((n_0 + n_1 +\cdots + n_k)+(n_1 + 2n_2 +\cdots+ kn_k)\right)T(r,g)\\&\quad\nonumber +\frac{D_M +1}{k(d_M -1)} T(r,g)+ S(r,g) \\&\nonumber\leq\frac{D_M +1}{k(d_M -1)} T(r,g) + \frac{1}{2(d_M -1)}\left(d_M + D_M\right) T(r,g) + S(r,g)\\&\nonumber\leq\frac{1}{d_M -1}\left(\frac{D_M +1}{k} + \frac{d_M +D_M}{2}\right) T(r,g) + S(r,g)\\&\leq\frac{(k+2)D_M + k d_M +2}{2k(d_M -1)}\; T(r,g) + S(r,g)\nonumber
\end{align}
for all $ r\in[1,\infty) $ excluding a set of finite Lebesgue measure.\vspace{1.2mm}

\noindent It follows from the inequality \eqref{eq-2.7} that
\begin{align*}
	\left(1- \frac{(k+2)D_M + k d_M +2}{2k(d_M -1)}\right)\leq 0,
\end{align*}
which is equivalent to
\begin{align*}
	d_M\leq\frac{(k+2)D_M + 2(k+1)}{k}.
\end{align*}
This leads to a contradiction, since \begin{align*}
	d_M>\frac{(k+2)D_M + 2(k+1)}{k}.
\end{align*} Consequently, $\mathcal{F}$ is a normal family, which completes the proof.
\end{proof}
\section{\bf Concluding remarks}
It is worth noting that the extension of the results established in this paper to the setting of several complex variables presents an intriguing and non-trivial open problem. In the one-dimensional case, we have demonstrated that the degree-to-weight ratio of the differential monomial $M[f]$ serves as a sharp, intrinsic boundary for the normality of a family $\mathcal{F}$. This threshold, defined by the relationship between $d_M, D_M,$ and the order $k$, essentially quantifies the amount of `algebraic pressure' required to suppress the growth of the spherical derivative $(M[f])^{\#}$ at its $a$-points.\vspace{1.2mm}

In higher dimensions, particularly for holomorphic mappings $f: \mathbb{C}^n \to \mathbb{P}^N(\mathbb{C})$, the concept of normality is inextricably linked to the geometry of hyperplanes and the behavior of the relevant Jacobian. The interaction between the spherical derivative and the distribution of pre-images becomes significantly more complex due to the existence of exceptional sets and the requirement for hyperplanes to be in general position. Our results suggest that a similar threshold to the one found in Theorem \ref{th-2.2} may exist for holomorphic mappings, potentially involving the intersection multiplicities of shared hyperplanes and the weights of associated differential operators.\vspace{1.2mm}

Furthermore, our extension of Lappan's theorem to a three-point set $E$ via higher-order spherical derivatives offers a new perspective on the minimum cardinality requirements in higher dimensions. While classical results for mappings into $\mathbb{P}^N(\mathbb{C})$ often necessitate $2N+1$ or more hyperplanes to guarantee normality, our work hints that the utilization of derivatives $f^{(j)}$ might allow for a reduction in the number of targets, provided the spherical growth of these derivatives is controlled.\vspace{1.2mm}

Our framework thus provides a foundational bridge that may complement the higher-dimensional analogs explored by Tu \cite{Tu-PAMS-1999} and Yang et al. \cite{Yang-Liu-Pang-HJM-2016}. Specifically, the techniques developed here for managing the rescaling process in the presence of differential monomials could prove instrumental in addressing normality criteria for holomorphic mappings where the standard Picard-type values are scarce. Future research into the multidimensional Zalcman-type rescaling lemma in conjunction with differential monomials could yield a more unified theory of normality across these disparate dimensions.

\vspace{2mm}

\noindent{\bf Acknowledgment:} The authors would like to thank the anonymous referees for their helpful comments and suggestions that led to an improved version of the manuscript.\\

\noindent{\bf Author Contributions:} All authors actively worked on the research contained in the paper. All authors reviewed the manuscript.\\

\noindent\textbf{Compliance of Ethical Standards:}\\

\noindent\textbf{Conflict of interest.} The authors declare that there is no conflict  of interest regarding the publication of this paper.\vspace{2mm}

\noindent\textbf{Data availability statement.}  Data sharing is not applicable to this article as no datasets were generated or analyzed during the current study. \vspace{2mm}

\noindent\textbf{Declaration of Fundings.} The authors declare that no funds, grants, or other support were received during the preparation of this manuscript.


\begin{thebibliography}{99}	
	
	\bibitem{Ahamed-Mandal-MM-2022} {\sc M. B. Ahamed} and {\sc S. Mandal}, Certain properties of normal meromorphic and normal harmonic mappings,\textit{ Monatshefte für Mathematik}, \textbf{200} (2023), 719–736. 
	
	\bibitem{Ahamed-Mandal-CMFT-2024} {\sc M. B. Ahamed} and {\sc S. Mandal}, Normality criterion concerning total derivatives of holomorphic functions in $\mathbb{C}^n$, \textit{Comput. Methods Funct. Theory} (2024).
	
	\bibitem{Aladro-Krantz-JMAA-1991} {\sc G. Aladro} and {\sc S. G. Krantz}, A criterion for normality in $ \mathbb{C}^n $, \textit{J. Math. Anal. Appl.} \textbf{161} (1991), 1–8.
	
	\bibitem{Arbe-Hern-MM-2019} {\sc H. Arbel$ \acute{a} $ez}, {\sc R. Hern$\acute{a}$ndez} and {\sc W. Sierra}, Normal harmonic mappings, \textit{ Monatshefte für Mathematik} \textbf{190} (2019), 425-439.
	
	\bibitem{Chuang-WP-1987}{\sc C. T. Chuang}, On differential polynomials. In: Analysis of One Complex Variable, \textit{World Sci.} (1987), 12–32.
	
	\bibitem{Clunie-Hayman-CMH-1966} {\sc J. Clunie} and {\sc W.K. Hayman}, The spherical derivative of integral and meromorphic functions, \textit{Comment. Math. Helv.} \textbf{40} (1966), 117-148.
	
	\bibitem{Deng-Ponn-Qiao-MM-2020} {\sc H. Deng}, {\sc S. Ponnusamy} and {\sc J. Qiao}, Properties of normal harmonic mappings, \textit{ Monatshefte für Mathematik} \textbf{193} (2020):605-621.
	
	\bibitem{Dethloff-Tan-Thin-JMAA}{\sc G. Dethloff}, {\sc T.V. Tan} and {\sc N.V. Thin}, Normal criteria for families of meromorphic functions, \textit{J. Math. Anal. Appl.} \textbf{411} (2014), 675–683.	
	
	\bibitem{Doeringer-PJM-1982} {\sc W. Doeringer}, Exceptional values of differential polynomials, \textit{Pac. J. Math.} \textbf{98} (1982).
	
	\bibitem{Dovbush-JGA-2022} {\sc P.V. Dovbush}, On a Normality Criterion of W. Schwick, \textit{J. Geom. Anal.} \textbf{31}(2021), 5355–5358.
	
	\bibitem{Dovbush-CVEE-2022} {\sc P. V. Dovbush}, On normal families in $ \mathbb{C}^n $, \textit{Complex Var. Elliptic Equ.} \textbf{67}(1)(2022), 1-8.
	
	\bibitem{Hayman-CP-1964}{\sc W.K. Hayman}, Meromorphic Functions, \textit{Clarendon Press, Oxford} (1964).
	
	\bibitem{Hamymen-Storvick-BLMS-1971} {\sc W. K. Hayman} and {\sc D. A. Storvick}, On normal functions, \textit{Bull. Lond. Math. Soc.} \textbf{3} (1971), 193-194.
	
	\bibitem{Hinchliffe-CMFT-2002}{\sc J.D. Hinchliffe}, On a result of Chuang retated to Hayman’s alternative, \textit{Comput. Methods Funct. Theory} \textbf{2} (2002), 293–297.
	
	\bibitem{Hinkkanen-NZJM-1993} {\sc A. Hinkkanen}, Normal family and Ahlfors’s five islands theorem,\textit{N. Z. J. Math.} \textbf{22} (1993), 39–41.
	
	\bibitem{Hu-Thin-CVEE-2020} {\sc P. C. Hu} and {\sc N. V. Thin}, Generalizations of Montel’s normal criterion and Lappan’s five-valued theorem to holomorphic curves, \textit{Complex Var. Elliptic Equ.} \textbf{65}(4) (2020), 525-543.
	
	\bibitem{Lappan-MZ-1965} {\sc P. Lappan}, Some results on harmonic normal functions, \textit{Math. Zeits.} \textbf{90}(2) (1965), 155-159.
	
	\bibitem{Lappan-CMH-1974} {\sc P. Lappan}, A criterion for a meromorphic function to be normal, \textit{Comment. Math. Helv.} \textbf{49} (1974), 492-495.
	
	\bibitem{Lappan-AASF-1977} {\sc P. Lappan}, The spherical derivative and normal functions, \textit{Ann. Acad. Sci. Fenn. Ser. A} \textbf{3}(2) (1977), 301-310.
	
	\bibitem{Lappan-MA-1978} {\sc P. Lappan}, On the normality of derivatives of functions, \textit{Math. Ann.} 238(1978), 141–146.
	
	\bibitem{Lappan-RRMPA-1994} {\sc P. Lappan}, A uniform approach to normal families, \textit{Rev. Roumaine Math. Pures Appl.} \textbf{39} (1994), 691–702.
	
	\bibitem{Lehto-Virtanen-AM-1957}{\sc O. Lehto} and {\sc K. Virtanen}, Boundary beheviour and normal meromorphic functions, \textit{Acta Math.} \textbf {97} (1957), 47-65. 
	
	\bibitem{Montel-CB-1927} {\sc P. Montel}, Lecons sur les families normales de fonctions analytiques et leur applicaeions, \textit{Coll. Borel}, (1927).
	
	\bibitem{Noshiro-JFSHU-1938} {\sc K. Noshiro}, Contributions to the theory of meromorphic functions in the unit circle, \textit{J. Fac. Svi. Hokkaido Univ.} \textbf{7} (1938), 149-159.
	
	\bibitem{Pang-Zalcman-1999} {\sc X. C. Pang} and {\sc L. Zalcman}, On theorems of Hayman and Clunie, \textit{N. Z. J. Math.} \textbf{28}(1999), 71-75. 
	
	\bibitem{Pommerenke-BLMS-1972} {\sc Ch. Pommerenke}, Problems in complex function theory, \textit{Bull. Lond. Math. Soc.} \textbf{4} (1972), 354–366.
	
	\bibitem{Ru-WS-2001} {\sc M. Ru}, Nevanlinna theory and its relation to diophantine approximation, Singapore, \textit{World Scientific}, 2001.
	
	\bibitem{Tran-Nguyen-CMFT-2017} {\sc T. V. Tan} and {\sc N. V. Thin}, On Lappan's five-point theorem, \textit{Comput.Methods Funct.Theory} \textbf{17} (2017), 47-63.
	
	\bibitem{Tan-Thin-JMAA-2017} {\sc T. V. Tan}, {\sc N. V. Thin} and {\sc V. V. Truong}, On the normality criteria of Montel and Bergweiler–Langley, \textit{J. Math. Anal. Appl.} \textbf{448} (2017), 319-325.
	
	\bibitem{Tu-PAMS-1999} {\sc Z. H. Tu}, Normality criteria for families of holomorphic mappings of several complex variables into $ \mathbb{P}^N(\mathbb{C}) $, \textit{Proc. Amer. Math. Soc.} \textbf{127} (1999), 1039-1049. 
	
	\bibitem{Yamashita-MZ-1975} {\sc S. Yamashita}, On normal meromorphic functions, \textit{Math. Z.} \textbf{141} (1975), 139–145.
	
	\bibitem{Yang-SSP-1993} {\sc L. Yang}, Value distribution theory, \textit{Springer and Science Press}, Berlin, 1993.
	
	\bibitem{Yang-Fang-Pang-PJM-2014} {\sc L. Yang}, {\sc C. Y. Fang} and {\sc X. C. Pang}, Normal families of holomorphic mappings into complex projective space concerning shared hyperplanes, \textit{Pac. J. Math.}  \textbf{272} (2014), 245–256.
	
	\bibitem{Yang-Liu-Pang-HJM-2016} {\sc L. Yang}, {\sc X. J. Liu} and {\sc X. C. Pang}, On families of meromorphic map into the complex projective space, \textit{Housten. J. Math.}  \textbf{42} (2016), 775–789.
	
	\bibitem{Yang-Yi-2006} {\sc C. Yang, H. Yi}, Uniqueness theory of meromorphic functions, \textit{Science Press, Beijing} (2006).
	
	\bibitem{Yosida-PPMSJS-1934} {\sc K. Yosida}, On a class of meromorphic functions, \textit{Proc. Phys. Math. Soc. Jpn. Ser.} \textbf{16}(3) (1934), 227-235.
	
	\bibitem{Zalcman-BAMS-1998}{\sc L. Zalcman}, Normal families: new perspective, \textit{Bull. Amer. Math. Soc.} \textbf{35} (1998), 215-230. 
\end{thebibliography}
\end{document}